\numberwithin{equation}{section}
\newcommand{\inpo}[2]{\left(#1,\, #2\right)_\Omega}
\newcommand{\inpg}[2]{\left(#1,\, #2\right)_\Gamma}
\newcommand{\asym}{{\operatorname{asym}\,}}
\newcommand{\skw}{{\operatorname{skw}}}
\newcommand{\sym}{{\operatorname{sym}\,}}
\newcommand{\Tr}{{\operatorname{Tr}\,}}
\newcommand{\curl}{{\operatorname{curl}}}
\renewcommand{\div}{{\operatorname{div}}}
\newcommand{\enorm}[1]{{\left\vert\kern-0.25ex\left\vert\kern-0.25ex\left\vert #1 
  \right\vert\kern-0.25ex\right\vert\kern-0.25ex\right\vert}}
\newcommand{\normL}[1]{\| #1 \|_{\Omega} }
\newcommand{\normLg}[1]{\| #1 \|_{\Gamma} }
\newcommand{\norm}[1]{\| #1 \| }
\newcommand{\jump}[1]{\llbracket #1 \rrbracket }
\newcommand{\hatj}{{\hat{\jmath}}}
\newlength{\arrow}
\begin{document}

\title{Stable Mixed Finite Elements for Linear Elasticity with Thin Inclusions 
	\thanks{The research of the authors was funded in part by the Norwegian Research Council
grants 233736, 250223. The first author thanks the German Research Foundation (DFG) for supporting this work by funding SFB 1313, Project Number 327154368.}
	}
\author{W. M. Boon \and J. M. Nordbotten}

\institute{W. M. Boon \at              
	Institute for Modelling Hydraulic and Environmental Systems, 
	University of Stuttgart,
	70569 Stuttgart, Germany \\
	\email{wietse.boon@iws.uni-stuttgart.com}
           \and
           J. M. Nordbotten \at
           Department of Mathematics, 
           University of Bergen, 
           5020 Bergen, Norway
}

\date{Received: date / Accepted: date}

\maketitle

\begin{abstract}
	We consider mechanics of composite materials in which thin inclusions are modeled by lower-dimensional manifolds. By successively applying the dimensional reduction to junctions and intersections within the material, a geometry of hierarchically connected manifolds is formed which we refer to as mixed-dimensional. 

	The governing equations with respect to linear elasticity are then defined on this mixed-dimensional geometry. The resulting system of partial differential equations is also referred to as mixed-dimensional, since functions defined on domains of multiple dimensionalities are considered in a fully coupled manner. With the use of a semi-discrete differential operator, we obtain the variational formulation of this system in terms of both displacements and stresses. The system is then analyzed and shown to be well-posed with respect to appropriately weighted norms.

	Numerical discretization schemes are proposed using well-known mixed finite elements in all dimensions. The schemes conserve linear momentum locally while relaxing the symmetry condition on the stress tensor. Stability and convergence are shown using a priori error estimates.

\subclass{65N12 \and 65N30 \and 74S05 \and 74K20}	
\end{abstract}

\section{Introduction}

Thin inclusions in elastic materials arise in a variety of scientific fields, including geo-physics, bio-mechanics, and the study of composite materials. The subsurface, for example, typically includes rock layers with significantly larger horizontal extent compared to their height. Since it is often infeasible to resolve such small heights for large-scale simulations, we consider the setting where the layer, or aquifer, is represented by a lower-dimensional manifold. The governing equations on this manifold can be derived using vertical integration \cite{nordbottenbook}, see e.g. \cite{bjornaraa2016vertically} for an application with respect to CO$_2$ storage.

Secondly, membranes occur frequently in the study of bio-mechanical systems. Examples range from cell walls in plants to the heart sac and dermal layer in human physiology. As a modeling assumption, each of these membranes can be represented by lower-dimensional manifolds. Their influence on the coupled mechanical system can then be incorporated by assigning significantly different material properties compared to the surroundings.

A third application concerns the study of composite or reinforced materials. In this context, the lower-dimensional manifolds correspond to the stiffer plates embedded in the material for strengthening purposes. This can be expanded to connected, two-dimensional objects such as H-beams and T-beams. The junctions are then considered one-dimensional manifolds, with inherited or separately defined material properties. We note that this work is limited to manifolds of codimension one and thus does not treat the case of embedded, one-dimensional rods in three dimensions.

The thin features are considered lower-dimensional and have elastic properties, yet a slightly different setting is presented than the conventional theory of thin shells \cite{ciarlet2000theory}. The main difference is that we focus on a strong coupling of a thin inclusion with a surrounding, elastic medium. The interest of this work is therefore more closely aligned to \cite{caillerie1980effect}, in which rigid, thin inclusions are considered.

The structure of the derived equations fits well with the mixed-dimensional framework derived in \cite{Nordbotten2017DD,boon2017excalc}. We aim to preserve this structure and retain a local conservation of linear momentum after discretization with the use of conforming, mixed finite elements. The construction of stable finite element pairs representing displacements and symmetric stresses is involved, and typically leads to higher-order elements for the stress space \cite{arnold2002mixed}. By relaxing the symmetry condition on the stress tensor as in \cite{arnold2006differential,awanou2013rectangular}, these difficulties can be mitigated.

This article is structured as follows. Section~\ref{sec:geometric_representation} introduces the notational conventions and the decomposition of the geometry according to dimension. On this geometry, Section~\ref{sec: Model Formulation} introduces the governing equations of the model in all dimensions. We introduce the relevant function spaces and present the derivation of the variational formulation in Section~\ref{sec:weak_form}. The resulting system of equations is proven to be well-posed in Section~\ref{sec: Well-posedness}. Finally, we propose conforming discretization schemes in Section~\ref{sec: Discretization} for which stability and convergence are shown.

\section{Geometry and Notation}
\label{sec:geometric_representation}

In this section, we introduce the mixed-dimensional geometry and establish notation. Here, we follow the conventions introduced in \cite{Boon2018Robust,boon2017excalc}.

Let us consider an $n$-dimensional domain $Y$ that contains thin, embedded structures, represented by lower-dimensional manifolds. In general, we consider $n = 3$, the two-dimensional case being simpler. Let $\Omega_i^{d_i}$ be such a manifold, with $i$ the unique index from a global set $I$ and $d_i$ its dimension. The superscript is frequently omitted for brevity. For $1 \le d \le n - 1$, we successively identify intersections between $d$-manifolds as $(d - 1)$-manifolds. All $\Omega_i$ are open sets and mutually disjoint. For simplicity, we restrict this work to the case in which all $\Omega_i$ have zero curvature, i.e. are flat. 

As an example, let us consider the two-dimensional set-up in Figure~\ref{fig: domain decomp} (left). Here, an embedded H-beam is described using two zero-dimensional intersection points and five one-dimensional line segments. The open set corresponding to the surrounding medium is given by $\Omega_8^2 = \operatorname{int}(Y \setminus \cup_{i = 1}^7 \Omega_i^{d_i})$.

We refer to the union $\bigcup_{i \in I} \Omega_i$ as the mixed-dimensional geometry $\Omega$. Let $I^d$ be the set of indices corresponding to $d$-manifolds and let $\Omega^d$ be the collection of such manifolds. In short, we denote
\begin{align*}
		I^d &= \{ i \in I: \ d_i = d \}, &
		\Omega^d &= \bigcup_{i \in I^d} \Omega_i, &
		\Omega &= \bigcup_{d = 0}^n \Omega^d.
\end{align*}

The interface between manifolds of codimension one will play an important role, and we adopt a separate notation for these. Let $J$ be the set of indices such that each $j \in J$ corresponds to an interface $\Gamma_j$ between $\Omega_i$ for some $i \in I$ and an adjacent domain of dimension $(d_i+1)$. $\Gamma_j$ physically coincides with $\Omega_i$ and we assume that a unique $\hatj \in I$ exists such that $\Gamma_j \subseteq \partial \Omega_\hatj$. 

To distinguish these different interfaces, we define the following index sets for $i \in I$:
\begin{align*}
	\hat{J}_i &= \{ j \in J: \ \Gamma_j = \Omega_i \} &
	\check{J}_i &= \{ j \in J: \ \Gamma_j \subseteq \partial \Omega_i \}.
\end{align*}
An example is shown in Figure~\ref{fig: domain decomp} (right), which emphasizes that for $j_1, j_2 \in \hat{J}_4$ with $j_1 \ne j_2$, we have $\hatj_1 = \hatj_2 = 8$. In other words, we allow for a manifold $\Omega_\hatj$ to border on multiple sides of $\Omega_i$ and assign a unique index $j \in \hat{J}_i$ to each side. Finally, we remark that $\hat{J}_i$ is void for all $i \in I^n$, by definition.

Using the same summation convention per dimension as above, we denote
	\begin{align*}
		\Gamma^d &= \bigcup_{i \in I^d} \bigcup_{j \in \hat{J}_i} \Gamma_j, &
		\Gamma &= \bigcup_{d = 0}^{n - 1} \Gamma^d.
	\end{align*}
Each $\Gamma_j$ is equipped with a unit normal vector $\bm{n}_j$, from the tangent space of $\Omega_\hatj$, oriented outward with respect to $\Omega_\hatj$. The subscript on $\bm{n}$ is omitted for brevity. In reference to the vector(s) normal to a manifold $\Omega_i$, we will use the check notation $\check{\bm{n}}_i$.

\begin{figure}[ht]
	\centering
	\includegraphics[width = 0.9 \textwidth]{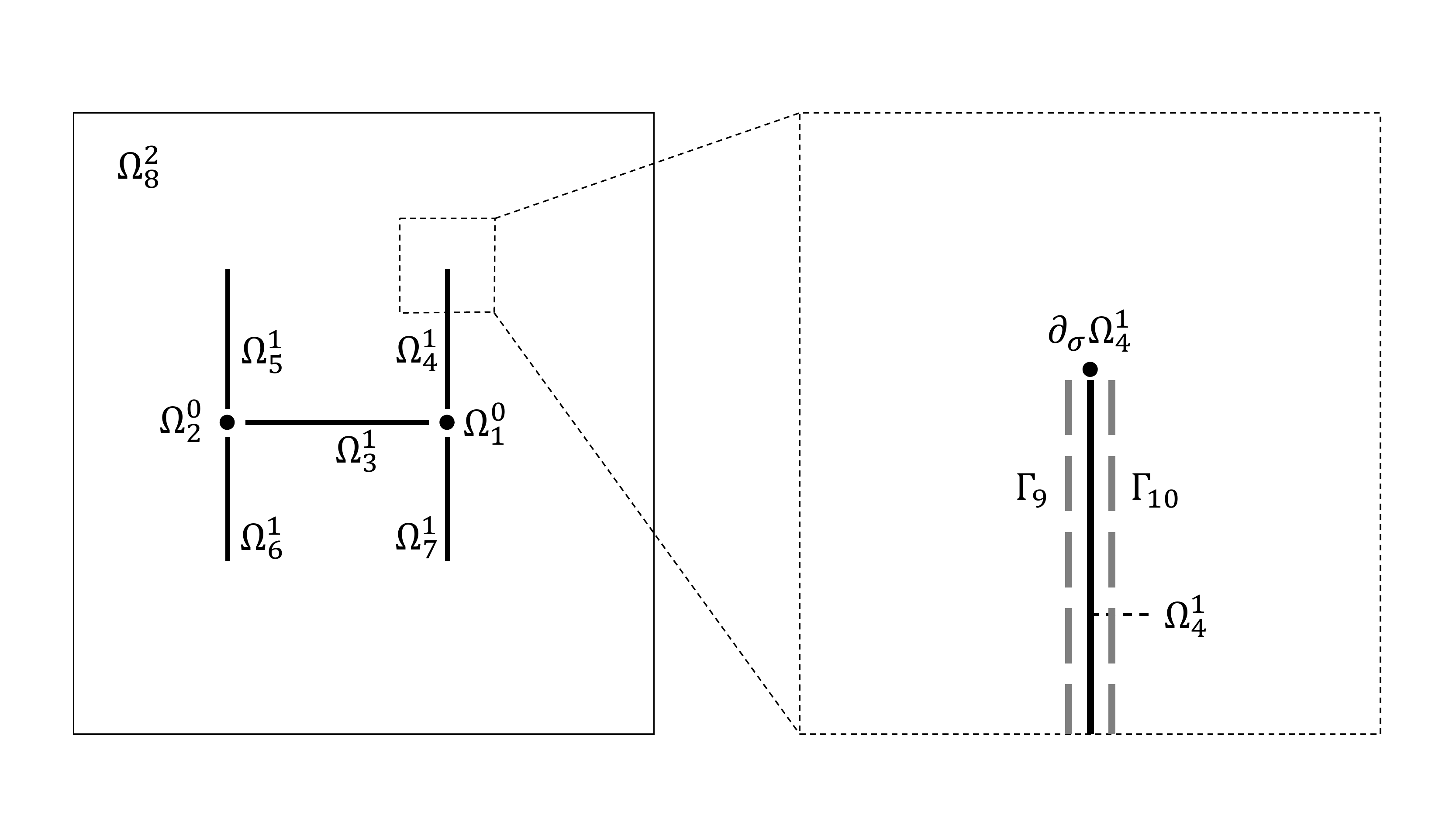}
	\caption{The domain is decomposed into manifolds of different dimensionalities $\Omega_i^d$ with $i$ the global index and $d = d_i$ its dimension. (Left) The intersection points become zero-dimensional manifold in the decomposition. Here, the index sets are given by $I^0 = \{ 1, 2 \}$, $I^1 = \{ 3, 4, 5, 6, 7 \}$, and $I^2 = \{ 8 \}$. (Right) A zoom on one of the extremities showing the logical interpretation of $\Omega_4^1$ and its interfaces ($\Gamma_j$) with $\Omega_8^2$. In this case, we have $\hatj = 8$ for all $j \in \hat{J}_4 = \{ 9, 10 \}$ and for $j_1, j_2, j_3 \in \hat{J}_1$, we have $(\hatj_1, \hatj_2, \hatj_3) = (3, 4, 7)$. On the extremity $\partial_\sigma \Omega_4^1$, a zero stress condition is imposed.}
	\label{fig: domain decomp}
\end{figure}

The boundary of the domain is given by the disjoint union $\partial_\sigma Y \bigcup \partial_u Y$ on which different boundary conditions will be imposed. In particular, we assume that the displacement is given on $\partial_u Y$ and the normal stress on $\partial_\sigma Y$. We denote for $i \in I$,
\begin{align*}
	\partial_u \Omega_i &= \partial_u Y \cap \partial \Omega_i, &
	\partial_\sigma \Omega_i &= \partial \Omega_i \setminus 
	\left(\cup_{j \in \check{J}_i} \Gamma_j \cup \partial_u \Omega_i \right).
\end{align*}
For analysis purposes, we assume that $|\partial_u \Omega_i| > 0$ for all $i \in I^n$, i.e. each subdomain of dimension $n$ is connected to a part of the boundary on which the displacement is prescribed. By omission of the subscript, we use $\partial_u \Omega$ and $\partial_\sigma \Omega$ to refer to the corresponding boundaries of the mixed-dimensional geometry.

Given a function $f$ defined on the mixed-dimensional geometry $\Omega$, let $f_i$ denote its restriction to $\Omega_i$, i.e. $f_i = (f)|_{\Omega_i}$. Furthermore, we employ the hat and check notation to distinguish instances of $f$ inherited from different domains onto the interface $\Gamma$. This means that on $\Gamma_j$ with $j \in \hat{J}_i$, we denote
\begin{align*}
	\check{f} &:= f_i, &
	\hat{f} &:= (f_\hatj)|_{\Gamma_j}.
\end{align*}
Note that the definition of $\hat{f}$ involves a trace of $f_\hatj$ onto $\Gamma_j$.

\section{Model Formulation} 
\label{sec: Model Formulation}

In this section, we consider the governing equations and introduce the model problem. Starting with the mathematical formulation of linear elasticity in the surrounding medium, we continue with the generalized equations on lower-dimensional manifolds to derive the strong form of the mechanics problem. The variational formulation is considered afterward in Section~\ref{sec:weak_form}.

\subsection{Governing Equations in the Surrounding Medium}
\label{sub:matrix equations}

Let us start by presenting the governing equations for linear elasticity in the surrounding medium $\Omega^n$. For $i \in I^n$, let $\sigma_i$ denote the elastic stress and $\bm{u}_i$ the displacement. Assuming infinitesimal strain, the stress-strain relationship has the general form:
\begin{align*}
	A \sigma_i &= \varepsilon(\bm{u}_i) = \sym(\nabla \bm{u}_i) = \frac{1}{2}\left(\nabla \bm{u}_i + (\nabla \bm{u}_i)^T\right). 
\end{align*}
In case of homogeneous and isotropic media, the operator $A$ describes Hooke's law and is given by
\begin{align}
	A \sigma_i  = \frac{1}{2\mu}\left(\sigma_i - \frac{\lambda}{2\mu + n\lambda} \Tr \sigma_i I \right), \label{operator A}
\end{align}
in which $\lambda$ and $\mu$ are the Lam\'e parameters and Tr is the matrix trace operator.

In the variational formulation presented Section~\ref{sec:weak_form}, the symmetry of the stress tensor $\sigma_i$ will be enforced in a weak sense. In preparation, we introduce the antisymmetric tensor $\chi_i = \asym{\nabla \bm{u}_i}$ such that:
\begin{align*}
  A\sigma_i &= \nabla \bm{u}_i - \chi_i.
\end{align*}

With the addition of linear and angular momentum conservation, the following system of equations is formed in each $\Omega_i$ with $i \in I^n$.
\begin{subequations} \label{strong system n}
	\begin{align}
		A\sigma_i - \nabla \bm{u}_i + \chi_i &= 0, \label{stress-strain n}\\
		\nabla \cdot \sigma_i &= \bm{f}_i, \label{mom balance n}\\
		\asym{\sigma_i} &= 0. \label{ang mom balance n}
	\end{align}
With $\bm{f}_i$ the body forces acting on $\Omega_i$. Note that the balance of angular momentum is enforced as the symmetry of the stress tensor $\sigma_i$ in \eqref{ang mom balance n}. The associated boundary conditions are given by
\begin{align}
	\bm{n} \cdot \sigma_i &= 0 \text{ on } \partial_\sigma \Omega_i, & 
	\bm{u}_i &= \bm{g}_u \text{ on } \partial_u \Omega_i.
\end{align}
\end{subequations}
with $\bm{g}_u$ a given function. We limit the exposition to homogeneous stress boundary conditions, noting that this can readily be extended to the general case.

\subsection{Geometrical Scaling and Constraints}
\label{sub:geometrical_scaling_and_constraints}

The governing equations on the lower-dimensional manifolds will be significantly influenced by the small width of the thin inclusions. We therefore devote this section to defining two key parameters, $\gamma$ and $\epsilon$ and the constraints these parameters adhere to.

Let $\gamma$ be a virtual parameter representing the relatively small length from the interface of the higher-dimensional domain to the central plane (2D), line (1D), or point (0D) of the physical inclusion. We assume that on each interface $\Gamma_j$, $\gamma$ is constant and positive.

The small value of $\gamma$ will introduce a scaling in the equations. In the formulation of the problem, it is advantageous if this scaling appears in the coupling terms between variables, rather than on the main diagonal of the system. For this purpose, we introduce a key concept from the context of subsurface flow models \cite{arbogast2016linear,Boon2018Robust}. As shown in those works, the desired scaling of the system is achieved by employing an appropriately scaled flux variable. In analogy, we employ a scaled stress by introducing the scaling parameter $\epsilon$ on $\Omega$. This parameter is defined as the square root of the (small) cross-sectional length (2D), area (1D), or volume (0D) of the dimensionally reducible feature. $\epsilon$ is assumed to be constant and positive for each manifold $\Omega_i$. The following relationship is imposed
\begin{align*}
	V_i &= \epsilon_i^2 \eqsim \gamma_j^{n - d_i}, & 
	\forall i &\in I, \ j \in \hat{J}_i.
\end{align*}
with $V_i$ the cross-sectional measure of the feature corresponding to $\Omega_i$ and $\epsilon_i = 1$ for $i \in I^n$. The relation $a \eqsim b$ (respectively $\lesssim$ and $\gtrsim$) implies that a constant $C>0$ exists, independent of $\epsilon$, $\gamma$, and the mesh size $h$, such that $a = C b$ (respectively $\le$ and $\ge$). Since $\epsilon$ is assumed to be small, we often use the relationship $\epsilon \lesssim 1$. 

Next, we relate the different values of $\epsilon$ between the dimensions. For each $\Omega_i$, let $\hat{\epsilon}_{\max}$ be the maximal value of $\epsilon$ in the adjacent higher-dimensional manifolds:
	\begin{align}
		(\hat{\epsilon}_{\max})|_{\Omega_i} &:= \max_{j \in \hat{J}_i} \ (\hat{\epsilon})|_{\Gamma_j} > 0, 
		& \forall i &\in I. \label{eq: def epsmax}
	\end{align}
In $\Omega^n$, we define $\hat{\epsilon}_{\max}$ as unity. Using this definition, we add a constraint to the geometry by assuming that $\hat{\epsilon}_{\max}$ bounds $\epsilon$ from above, i.e.
	\begin{align}
		\epsilon &\lesssim \hat{\epsilon}_{\max}. \label{eps bounded by emax}
	\end{align}

With the parameter $\epsilon$ defined, we continue with the scaling of the stress variable on the lower-dimensional manifolds. In $\Omega_i$ with $d_i < n$, let $\sigma^{avg}_i$ contain the columns of the Cauchy stress tensor associated with the tangent bundle of $\Omega_i$, averaged over the cross-section of the physical feature. The integrated stress tensor $\sigma^{int}_i$, on the other hand, is obtained after multiplication with the cross-sectional measure $V_i$. 

Using the factor $\epsilon_i := \sqrt{V_i}$, the scaled stress $\sigma_i$ on $\Omega_i$ is defined as
\begin{align*}
	\epsilon \sigma^{avg}_i = \sigma_i = \epsilon^{-1} \sigma^{int}_i.
\end{align*}

The columns and first $d_i$ rows of $\sigma_i$ correspond to the basis vectors from the tangent bundle of $\Omega_i$. The final $(n - d_i)$ rows relate to the directions normal to the manifold. Thus, $\sigma_i \in \mathbb{R}^{n \times d_i}$ by definition, making it undefined in the intersection points $\Omega^0$. The displacement $\bm{u}_i$ in $\Omega_i$ remains unscaled and is naturally in $\mathbb{R}^n$. Again, we omit the subscript $i$ to refer to the mixed-dimensional entities, i.e.
\begin{align*}
 	\sigma &= \bigoplus_{d = 1}^n \bigoplus_{i \in I^d} \sigma_i, &
 	\bm{u} &= \bigoplus_{i \in I} \bm{u}_i.
\end{align*}
We emphasize that the integrated and average stress quantities can readily be recovered from the scaled stress $\sigma$ after appropriate post-processing with the known quantity $\epsilon$. 

\subsection{Mixed-Dimensional Equations}
\label{sub:embedded_equations}

With the given scaling from the previous subsection, let us consider the governing equations in the lower-dimensional manifolds. In this generalization to the mixed-dimensional geometry, a structure similar to the system \eqref{strong system n} is uncovered. We start by introducing the linear momentum balance equation, followed by the stress-strain relationships and finish with the conservation of angular momentum.

The balance of linear momentum \eqref{mom balance n} is generalized first. After integrating the conservation law in the direction(s) normal to the inclusion (see e.g. \cite{Boon2018Robust,Roberts2} for the analogue in fracture flow models), we obtain 
\begin{align*}
	\nabla \cdot \epsilon_i \sigma_i - \sum_{j \in \hat{J}_i} (\bm{n} \cdot \epsilon_\hatj \sigma_\hatj)|_{\Gamma_j} &= \epsilon_i^2 \bm{f}_i, 
	& \text{in } \Omega_i, \ i \in \bigcup_{d = 1}^{n - 1} I^d.
\end{align*}
Here, $\bm{f}_i$ is the body force acting on $\Omega_i$, averaged over the cross-section with measure $\epsilon^2$. The in-plane divergence $(\nabla \cdot)$ on $\Omega_i$ and the normal trace operator $(\bm{n} \cdot)$ onto $\Gamma_j$ are applied row-wise. Hence, the divergence in $\Omega_i$ maps from $\mathbb{R}^{n \times d_i}$ to $\mathbb{R}^n$ and the normal trace on $\Gamma_j$ maps $\mathbb{R}^{n \times d_\hatj}$ to $\mathbb{R}^n$.

For the zero-dimensional manifolds, there are no divergence operator or $\sigma_i$ available and the balance law is completely given by the sum of forces from $\Omega^1$:
\begin{align*}
	\sum_{j \in \hat{J}_i} (\bm{n} \cdot \epsilon_\hatj \sigma_\hatj)|_{\Gamma_j} &= \epsilon_i^2 \bm{f}_i, 
	& \text{in } \Omega_i,\ i \in I^0.
\end{align*}

To shorten notation, we introduce the jump operator $\jump{\cdot}_i$ which maps functions defined on the interface $\Gamma_j$ with $j \in \hat{J}_i$ to the central manifold $\Omega_i$ such that
\begin{align*}
	\jump{\phi}_i &= \sum_{j \in \hat{J}_i} \phi|_{\Gamma_j}, & \forall i &\in I.
\end{align*}

Following \cite{Boon2018Robust}, we introduce the mixed-dimensional divergence operator ($\mathfrak{D} \cdot$) as $\mathfrak{D} \cdot \sigma := \nabla \cdot \sigma - \jump{ \bm{n} \cdot \hat{\sigma} }$ and rewrite the conservation equation to the concise form
\begin{align}
	\mathfrak{D} \cdot \epsilon \sigma &= \epsilon^2 \bm{f}. \label{mom balance manifold}
\end{align}

We now continue by defining the stress-strain relationships in the lower-dimensional manifolds in analogy with \eqref{stress-strain n}. For that, we first introduce the gradient operator $\mathfrak{D}$ as
\begin{align*}
	\mathfrak{D} \bm{u}
	&= \left\{\begin{aligned}
	& \nabla \bm{u}, &\text{in } &\Omega, \\
	& \check{\bm{u}} - \hat{\bm{u}}, &\text{on } &\Gamma.
		\end{aligned} \right.
\end{align*}
We emphasize that the gradient $\nabla$ relates to the tangential direction(s) and is applied row-wise. Since we have $\mathfrak{D}\bm{u}$ defined on both $\Omega$ and $\Gamma$, we need to provide stress-strain relationships inside and on the boundaries of the domains. 

The stress-strain relationship is then described by an operator $\mathfrak{A}$ acting on the averaged stress. Here, we pay special attention to the scaling with $\epsilon$. Thus, recalling that the averaged stress is denoted by $\sigma^{avg} = \epsilon^{-1} \sigma$, the stress-strain relationships are given by
\begin{align*}
	\mathcal{A} (\epsilon^{-1} \sigma)
	&= \mathfrak{D} \bm{u} - \chi, &
	\text{on }& \Omega \times \Gamma
\end{align*}
with $\chi$ to be defined.
To obtain a symmetric system, we scale this equation with $\epsilon$. Noting that $\epsilon$ and $\mathcal{A}$ do not necessarily commute, we introduce $\mathfrak{A} := \epsilon \mathcal{A} \epsilon^{-1}$ to obtain the generalized version of the stress-strain relationship:
\begin{align}
	\mathfrak{A} \sigma
	&= \epsilon \mathfrak{D} \bm{u} - \epsilon \chi, &
	\text{on }& \Omega \times \Gamma.
	\label{stress-strain tilde}
\end{align}
The restrictions of $\mathfrak{A}$ to the manifolds $\Omega$ and interfaces $\Gamma$ are respectively denoted by
\begin{align*}
	\mathfrak{A}_\| \sigma &:= (\mathfrak{A} \sigma)|_\Omega, &
	\mathfrak{A}_\perp \sigma &:= (\mathfrak{A} \sigma)|_\Gamma
\end{align*}
The variable $\chi$ is the generalization of the asymmetric $\chi_i$ from section~\ref{sub:matrix equations}, given by
\begin{align*}
	\chi_i &= \begin{bmatrix}
		\asym (\nabla \bm{u}_{i, \|}) \\
		0
	\end{bmatrix} \in \mathbb{R}^{n \times d_i}, &\text{in } \Omega_i,\ i \in \bigcup_{d = 2}^{n} I^d.
\end{align*}
We interpret $(\chi)|_\Gamma = 0$ and $\chi_i = 0$ for $i \in I^0 \cup I^1$.

\begin{example}
	We provide an explicit example of $\mathfrak{A}$ using a fictitious material. In this material, we assume that the stress-strain relationships in tangential and normal directions are independent. This assumption leads to a model which captures in-plane shearing whereas out-of-plane stress components follow a one-dimensional Hooke's law. This behavior is described by the following constitutive laws
\begin{subequations}\label{eq: stress-strain tangential}
\begin{align} 
	\epsilon^{-1} \sigma &= 
	\begin{bmatrix}
		2\mu \sym(\nabla \bm{u}_{\|}) + \lambda \Tr (\nabla \bm{u}_{\|}) I \\
		2\mu \nabla \bm{u}_{\perp}
	\end{bmatrix}, &\text{in } \Omega, \\
	\bm{n} \cdot (\hat{\epsilon}^{-1} \hat{\sigma}) 
	&= 2\mu_\perp (\check{\bm{u}} - \hat{\bm{u}}) + \lambda_\perp (\bm{n} \cdot (\check{\bm{u}} - \hat{\bm{u}})) \bm{n}, &\text{on } \Gamma,
\end{align}\end{subequations}
Here, $\mu$ and $\lambda$ (respectively $\mu_\perp$ and $\lambda_\perp$) are the Lam\'e parameters describing the stress-strain relationship tangential (and normal) to the manifold. 

The inverse relations, mapping stresses to strains, are then given by
\begin{align*}
	(\mathfrak{A}_\| \sigma)|_{\Omega_i}
	&= 
	(2\mu)^{-1}\left( \sigma_i - \lambda (2\mu + d_i\lambda)^{-1} \Tr \left(\sigma_i \right) [I_{d_i}, 0]^T \right), 
	&i &\in \bigcup_{d = 1}^n I^d, \\
	(\mathfrak{A}_\perp \sigma)|_{\Gamma_j}
	&= (2 \mu_\perp)^{-1} (\bm{n} \cdot \sigma_\hatj - \lambda_\perp (2 \mu_\perp + \lambda_\perp)^{-1} (\bm{n} \cdot \sigma_\hatj \cdot \bm{n}) \bm{n}), 
	& j &\in J.
\end{align*}
Here, $I_d$ is the identity tensor in $\mathbb{R}^{d \times d}$. We remark that in this example, we have $\epsilon \mathfrak{A} \epsilon^{-1} = \mathfrak{A}$.
\qed
\end{example}

Finally, we consider the symmetry of the stress tensor. 
Since the lower-dimensional manifolds model objects with finite width, the limit argument used to prove symmetry of the stress tensor is only valid within manifolds (and not transversely). Consequently, symmetry of the stress tensor is imposed within each manifold, expressed as: 
\begin{align}
	\asym \epsilon \sigma &= 0.
	\label{eq: sym manifold}
\end{align}
We remark that for $i \in I^0 \cup I^1$, this equation is trivial since either $\sigma_i$ does not exist or is a vector. For $i \in I^2$, this equation evaluates the asymmetry of the in-plane components.

Gathering \eqref{mom balance manifold}, \eqref{stress-strain tilde}, and \eqref{eq: sym manifold}, we arrive at the strong form of the generalized system of equations:
\begin{subequations} \label{strong system fracture}
	\begin{align}
 		\mathfrak{A} \sigma
		- \epsilon \mathfrak{D} \bm{u}
		+ \epsilon \chi
		&= 0 & \text{ in }&\Omega \times \Gamma, \label{s-s}\\
		\mathfrak{D} \cdot \epsilon \sigma &= \epsilon^2 \bm{f} & \text{ in }&\Omega, \\
		\asym \epsilon \sigma &= 0 & \text{ in }&\Omega.
	\end{align}
We emphasize that $\epsilon$ is defined as the square root of the cross-sectional measure, leading to the appearance of $\epsilon^2$ in the second equation. To close the system, the boundary conditions are given by
\begin{align}
	\bm{n} \cdot \epsilon \sigma &= 0 \text{ on } \partial_\sigma \Omega, & 
	\bm{u} &= \bm{g}_u \text{ on } \partial_u \Omega.
\end{align}
\end{subequations}
System \eqref{strong system fracture} has a structure similar to \eqref{strong system n} in that it
is composed of constitutive law(s) complemented with a differential and algebraic constraint. This
structure is common in models concerning linear elasticity with relaxed symmetry
\cite{arnold2006differential,awanou2013rectangular}. We will show in the next section that the
system indeed corresponds to a symmetric saddle-point problem.

\section{Variational Formulation}
\label{sec:weak_form}

With the goal of obtaining a mixed finite element discretization, this section presents the weak formulation of the continuous problem. In order to do this, we introduce several analytical tools. First, the relevant function spaces are defined as well as the notational conventions concerning inner products. Next, we derive the variational formulation of \eqref{strong system fracture} and show that it corresponds to a symmetric saddle point problem. 

\subsection{Function Spaces}

The function spaces relevant for this problem are constructed as products of familiar function spaces on the $d$-dimensional manifolds. In particular, we define
\begin{subequations} \label{cont spaces}
	\begin{align}
			\Sigma &= \prod_{d = 1}^n \prod_{i \in I^d}
				\left\{\tau_i \in (H(\div, \Omega_i))^n : \ 
			\begin{aligned}
				\bm{n} \cdot \tau_i&|_{\partial_\sigma \Omega_i} = 0, \\
			 	\bm{n} \cdot \tau_i&|_{\Gamma_j} \in (L^2(\Gamma_j))^n, \ \forall j \in \check{J}_i
			\end{aligned}
				\right\} \\
			\bm{U} &= \prod_{d = 0}^n \prod_{i \in I^d}
			\left(L^2(\Omega_i)\right)^n, \\
			R &= \prod_{d = 2}^n \prod_{i \in I^d}
			\left(L^2(\Omega_i)\right)^{k_d},
	\end{align}
\end{subequations}
where $\Sigma$ denotes the function space for the stress, $\bm{U}$ contains the displacement, and $R$ is the function space for the Lagrange multiplier enforcing symmetry of the stress tensor. The exponent $k_d$ is given by
$k_d = \left(\begin{smallmatrix} d \\ 2 \end{smallmatrix}\right) = d (d - 1) / 2$
, see e.g. \cite{arnold2006differential,awanou2013rectangular}. 

The mixed-dimensional $L^2$-inner products on $\Omega$ and $\Gamma$ are defined as the sum of inner products over all corresponding manifolds:
\begin{align*}
	(f, g)_{\Omega} &= \sum_{i \in I} (f_i, g_i)_{\Omega_i} 
	, & 
	(\phi, \varphi)_{\Gamma} &= \sum_{i \in I} \sum_{j \in \hat{J}_i} (\phi_i, \varphi_i)_{\Gamma_i} 
\end{align*}
Here, the implicit assumption is made that the contribution is zero for all manifolds on which $f$ is undefined. For example, for $\sigma, \tau \in \Sigma$, the inner product $(\sigma, \tau)_{\Omega}$ has no contribution on $\Omega^0$. Likewise for functions in $R$, the inner product is zero on manifolds $\Omega_i$ with $i \in I^0 \cup I^1$.

For functions $\sigma, \tau \in \Sigma$, we note that they are defined on both $\Omega$ and $\Gamma$. For convenience, we introduce the combined inner product
\begin{align*}
	(\sigma, \tau)_{\Omega \times \Gamma} &:= (\sigma, \tau)_{\Omega} + (\bm{n} \cdot \sigma, \bm{n} \cdot \tau)_{\Gamma},
\end{align*}
which, in the case of the operator $\mathfrak{A}$, is understood as
\begin{align*}
	(\mathfrak{A} \sigma, \tau)_{\Omega \times \Gamma} 
	&:= (\mathfrak{A}_\| \sigma, \tau)_{\Omega} 
	+ (\mathfrak{A}_\perp \sigma, \bm{n} \cdot \tau)_{\Gamma}.
\end{align*}

The inner products naturally induce the $L^2$-type norms $\| \cdot \|_{\Omega}$, $\| \cdot \|_{\Gamma}$, and $\| \cdot \|_{\Omega \times \Gamma}$. With these norms, we assume that $\mathfrak{A}$ is continuous and coercive with respect to the norm $\| \cdot \|_{\Omega \times \Gamma}$. Thus, for all $\sigma, \tau \in \Sigma$, we have: 
		\begin{align}\label{Bounds A}
			(\mathfrak{A} \sigma, \tau)_{\Omega \times \Gamma} &\lesssim 
			\norm{\sigma}_{\Omega \times \Gamma}
			\norm{\tau}_{\Omega \times \Gamma} &
			(\mathfrak{A} \sigma, \sigma)_{\Omega \times \Gamma} &\gtrsim 
			\norm{\sigma}_{\Omega \times \Gamma}^2
		\end{align} 
We emphasize that the constants within these bounds are independent of $\epsilon$.

\subsection{Identifying the Symmetric Saddle Point Problem}

In this section, we make two key observations which allow us to derive a variational formulation of \eqref{strong system fracture} which is symmetric. First let us consider the terms containing $u$ in the stress-strain relationships \eqref{s-s} and \eqref{s-s}. We multiply these terms with $\tau \in \Sigma$ and $\bm{n} \cdot \tau \in L^2(\Gamma)$, respectively, and integrate to obtain the following integration by parts formula:
\begin{align}
	(\epsilon \mathfrak{D}\bm{u}, \tau)_{\Omega \times \Gamma} 
	&= \inpo{\check{\epsilon} \nabla \bm{u}}{\tau} +  \inpg{\hat{\epsilon} (\check{\bm{u}}- \hat{\bm{u}})}{\bm{n} \cdot \hat{\tau}} \nonumber \\
	&= - \inpo{\bm{u}}{\nabla \cdot \check{\epsilon} \tau}
	+ (\bm{u}, \bm{n} \cdot \epsilon \tau)_{\partial_u \Omega}
	+ \inpg{\hat{\bm{u}}}{\bm{n} \cdot \hat{\epsilon} \hat{\tau}} \nonumber \\
	&\ \ \ + \inpg{\hat{\epsilon} \check{\bm{u}}}{\bm{n} \cdot \hat{\tau}}
	- \inpg{\hat{\epsilon} \hat{\bm{u}}}{\bm{n} \cdot \hat{\tau}} \nonumber \\
	&= - \inpo{\bm{u}}{\nabla \cdot \check{\epsilon} \tau}
	+ \inpo{\bm{u}}{\jump{\bm{n} \cdot \hat{\epsilon} \hat{\tau}}} 
	+ (\bm{u}, \bm{n} \cdot \epsilon \tau)_{\partial_u \Omega} \nonumber \\
	&= - \inpo{\bm{u}}{\mathfrak{D} \cdot \epsilon \tau}
	+ (\bm{u}, \bm{n} \cdot \epsilon \tau)_{\partial_u \Omega}. \label{eq: integration by parts}
\end{align}
Here $\mathfrak{D} \cdot$ is the mixed-dimensional divergence operator from \eqref{mom balance manifold}.

Secondly, we introduce the operator $\skw$ which evaluates the asymmetric part of a matrix. More specifically, for a matrix $B \in \mathbb{R}^{n \times d}$ with components $b_{ij}$, let
\begin{align*}
	\skw B  &=
	\left\{
	\begin{aligned}
		[b_{23} - b_{32},\ b_{31} - b_{13},\ & b_{12} - b_{21}]^T, & d = 3, \\
		& b_{12} - b_{21}, & d = 2.
	\end{aligned}
	\right.
\end{align*}
This operator is naturally lifted to $\skw: \Sigma \to R$. Next, we turn our attention to the term in \eqref{s-s} containing the asymmetric variable $\chi$. Let us multiply this term with $\tau \in \Sigma$ and integrate over $\Omega^2 \cup \Omega^3$. With the introduction of $r = \frac{1}{2} \skw \chi \in R$, we obtain
\begin{align*}
	\inpo{ \epsilon \chi}{\tau}
	&= \inpo{\chi}{ \epsilon \tau}
	= \inpo{r}{\skw  \epsilon \tau}.
\end{align*}

By employing test functions $(\tau, \bm{v}, s) \in \Sigma \times \bm{U} \times R$, the integration by parts formula \eqref{eq: integration by parts}, and the operator $\skw$, we obtain the following variational formulation of the problem \eqref{strong system fracture}: \\
Find $(\sigma, \bm{u}, r) \in \Sigma \times \bm{U} \times R$ such that 
\begin{subequations} \label{weakform}
	\begin{align}
		(\mathfrak{A} \sigma, \tau)_{\Omega \times \Gamma} 
		+ \inpo{\bm{u}}{\mathfrak{D} \cdot \epsilon \tau}
		+ \inpo{r}{\skw \epsilon \tau}
		&= (\bm{g}_u, \bm{n} \cdot \epsilon \tau)_{\partial_u \Omega},
		& \tau &\in \Sigma, \\
		\inpo{\mathfrak{D} \cdot \epsilon \sigma}{\bm{v}} &= \inpo{\epsilon^2 \bm{f}}{\bm{v}},
		& \bm{v} &\in \bm{U}, \\
		\inpo{\skw \epsilon \sigma}{s} &= 0, & s &\in R.
	\end{align}
\end{subequations}

We identify system \eqref{weakform} as a saddle point problem by introducing the bilinear forms $a: \Sigma \times \Sigma \to \mathbb{R}$ and $b: \Sigma \times (\bm{U} \times R) \to \mathbb{R}$:
\begin{subequations}  \label{operators}
	\begin{align}
		a(\sigma; \tau) &:=
			(\mathfrak{A} \sigma, \tau)_{\Omega \times \Gamma}   \label{operator a}\\
		b(\sigma; \bm{v}, s) &:= \inpo{\mathfrak{D} \cdot \epsilon \sigma}{\bm{v}}
		+ \inpo{\skw \epsilon \sigma}{s}. \label{operator b}
	\end{align}
\end{subequations}

The problem \eqref{weakform} can then be rewritten to the following, equivalent formulation: \\
Find $(\sigma, \bm{u}, r) \in \Sigma \times \bm{U} \times R$ such that 
\begin{subequations}
	\begin{align}
		a(\sigma; \tau)
		+ b(\tau; \bm{u}, r) 
		&= (\bm{g}_u, \bm{n} \cdot \epsilon \tau)_{\partial_u \Omega} \\
		b(\sigma; \bm{v}, s) 
		&= \inpo{\epsilon^2 \bm{f}}{\bm{v}}
	\end{align}
\end{subequations}
for all $(\tau, \bm{v}, s) \in \Sigma \times \bm{U} \times R$.

\section{Well-Posedness} \label{sec: Well-posedness}

In this section, we show well-posedness of the continuous formulation \eqref{weakform}. The key is to associate appropriately weighted norms to the function spaces introduced in the previous section. In the mixed-dimensional setting considered here, let us endow $\Sigma$, $\bm{U}$, and $R$ with the following norms
\begin{subequations} \label{norms}
	\begin{align}
		\| \tau \|_\Sigma &= (\normL{\tau}^2 + \normLg{ \bm{n} \cdot \tau}^2 + \normL{ \hat{\epsilon}_{\max}^{-1} \mathfrak{D} \cdot \epsilon \tau}^2)^{1/2}, \\
		\| \bm{v} \|_{U} &= \normL{\hat{\epsilon}_{\max} \bm{v}}, \\
		\| s \|_{R} &= \normL{\epsilon s}.
	\end{align}
\end{subequations}

The proof of well-posedness consists of proving sufficient conditions on the bilinear forms $a$ and $b$ from \eqref{operators} to invoke standard saddle-point theory.
First, we show continuity of the operators, followed by ellipticity of $a$ and inf-sup on $b$.
\begin{lemma}[Continuity] \label{lem: continuity}
	The bilinear forms $a$ and $b$ from \eqref{operators} are continuous with respect to the norms given by \eqref{norms}.
\end{lemma}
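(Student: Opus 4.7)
The plan is to verify continuity of $a$ and $b$ by direct Cauchy--Schwarz bounds, with the weights built into the norms \eqref{norms} doing the work needed to absorb the various powers of $\epsilon$. No constructive argument is needed; the whole content is bookkeeping, arranged so that every resulting constant is independent of $\epsilon$.

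For $a(\sigma;\tau) = (\mathfrak{A}\sigma,\tau)_{\Omega\times\Gamma}$ I would simply invoke the upper bound in \eqref{Bounds A}, which gives $a(\sigma;\tau)\lesssim \|\sigma\|_{\Omega\times\Gamma}\|\tau\|_{\Omega\times\Gamma}$. Reading off the definition of $\|\cdot\|_\Sigma$, one sees $\|\tau\|_{\Omega\times\Gamma}^2 = \|\tau\|_\Omega^2 + \|\bm{n}\cdot\tau\|_\Gamma^2 \le \|\tau\|_\Sigma^2$, which closes the estimate.

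For $b$ I would split into its two terms. On the divergence term I would insert the factor $\hat{\epsilon}_{\max}$ by hand,
\begin{equation*}
(\mathfrak{D}\cdot\epsilon\sigma,\bm{v})_\Omega = (\hat{\epsilon}_{\max}^{-1}\mathfrak{D}\cdot\epsilon\sigma,\,\hat{\epsilon}_{\max}\bm{v})_\Omega,
\end{equation*}
and then apply Cauchy--Schwarz manifold-by-manifold; the two factors are by construction exactly $\|\sigma\|_\Sigma$ and $\|\bm{v}\|_U$. On the skew term I would use that $\epsilon$ is scalar (and piecewise constant) and commutes with $\skw$, so $(\skw\epsilon\sigma,s)_\Omega = (\skw\sigma,\epsilon s)_\Omega$; then Cauchy--Schwarz together with the pointwise bound $|\skw B|\lesssim |B|$ for any matrix $B$ gives $\lesssim \|\sigma\|_\Omega\|\epsilon s\|_\Omega \le \|\sigma\|_\Sigma\|s\|_R$. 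Summing the two contributions and using a triangle inequality on the product norm on $\bm{U}\times R$ yields the continuity of $b$.

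I expect no real obstacle here; the tricky design decisions already sit in the choice of norms \eqref{norms} and the scaling $\sigma = \epsilon\sigma^{avg}$. The only point that merits a brief remark in the writeup is why the constants are $\epsilon$-independent: the weights $\hat{\epsilon}_{\max}^{\pm 1}$ on $\Sigma$ and $\bm{U}$ were put in precisely to balance the factor $\epsilon$ hidden inside $\mathfrak{D}\cdot\epsilon\sigma$, while the $\epsilon$ in $\|s\|_R$ balances the $\epsilon$ in $\skw\epsilon\sigma$.
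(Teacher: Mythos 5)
Your proposal is correct and follows essentially the same route as the paper: continuity of $a$ from the upper bound in \eqref{Bounds A} together with $\norm{\tau}_{\Omega\times\Gamma}\le\norm{\tau}_\Sigma$, and continuity of $b$ by redistributing the weights $\hat{\epsilon}_{\max}^{\pm1}$ and $\epsilon$ between the two factors before applying Cauchy--Schwarz term by term. The only difference is cosmetic: you make explicit the pointwise bound $|\skw B|\lesssim|B|$, which the paper leaves implicit.
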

\begin{proof}
	The continuity of $a$ follows from \eqref{Bounds A}. 
	For the blinear form $b$, we derive
	\begin{align*}
		b(\sigma; \bm{v}, s) 
		&= \inpo{\mathfrak{D} \cdot \epsilon \sigma}{\bm{v}}
		+ \inpo{\skw \epsilon \sigma}{s} \nonumber\\
		&= \inpo{\hat{\epsilon}_{\max}^{-1} \mathfrak{D} \cdot \epsilon \sigma}{\hat{\epsilon}_{\max} \bm{v}}
		+ \inpo{\skw \sigma}{\epsilon s} \nonumber\\
		&\le \normL{\hat{\epsilon}_{\max}^{-1} \mathfrak{D} \cdot \epsilon \sigma} 
		\normL{\hat{\epsilon}_{\max} \bm{v}}
		+ \normL{ \sigma}
		\normL{\epsilon s} \nonumber\\
		&\lesssim \norm{\sigma}_\Sigma (\norm{\bm{v}}_{U} + \norm{s}_{R}).
	\end{align*}
\qed
\end{proof}

Next, we focus on the bilinear form $a$. For the purposes of our analysis, it suffices to show that $a$ is elliptic on a specific subspace of $\Sigma$ generated by $b$. This is formally considered in the following lemma.
\begin{theorem}[Ellipticity]\label{Thm: ellipticity}
	Given the bilinear forms $a$ and $b$ from \eqref{operators}. If $\sigma \in \Sigma$ satisfies
	\begin{align}
		b(\sigma; \bm{v}, s) &= 0, & \text{for all } ( \bm{v}, s) \in \bm{U} \times R, \label{theorem condition}
	\end{align}
	then the following ellipticity bound holds
	\begin{align*}
		a(\sigma; \sigma) \gtrsim \norm{\sigma}_\Sigma^2.
	\end{align*}
\end{theorem}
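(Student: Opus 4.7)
The plan is to observe that the hypothesis \eqref{theorem condition} forces the third term in the $\Sigma$-norm to vanish, after which coercivity of $\mathfrak{A}$ closes the argument directly. There is essentially no obstacle; the theorem is engineered so that the weighted norm on $\Sigma$ collapses to $\|\cdot\|_{\Omega\times\Gamma}$ on the kernel of $b$.

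\medskip

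\noindent\textbf{Step 1: extract the kernel conditions.} Since $\sigma \in \Sigma$, we have $\mathfrak{D}\cdot\epsilon\sigma \in L^2(\Omega)$ and $\skw \epsilon \sigma \in L^2$ on the dimensions where $R$ is defined. Testing $b(\sigma;\bm{v},s)=0$ against $\bm{v} = \hat{\epsilon}_{\max}^{-2}\,\mathfrak{D}\cdot\epsilon\sigma \in \bm{U}$ (which is admissible because $\hat{\epsilon}_{\max}>0$ by \eqref{eq: def epsmax}) yields $\mathfrak{D}\cdot\epsilon\sigma = 0$ in $L^2(\Omega)$. Testing against $s = \skw\epsilon\sigma \in R$ analogously yields $\skw\epsilon\sigma = 0$.

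\medskip

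\noindent\textbf{Step 2: collapse the norm.} The first identity above gives $\|\hat{\epsilon}_{\max}^{-1}\mathfrak{D}\cdot\epsilon\sigma\|_\Omega = 0$, so the definition \eqref{norms} reduces to
\begin{align*}
\|\sigma\|_\Sigma^2 = \|\sigma\|_\Omega^2 + \|\bm{n}\cdot\sigma\|_\Gamma^2 = \|\sigma\|_{\Omega\times\Gamma}^2.
\end{align*}

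\medskip

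\noindent\textbf{Step 3: apply coercivity of $\mathfrak{A}$.} The lower bound in \eqref{Bounds A} gives
\begin{align*}
a(\sigma;\sigma) = (\mathfrak{A}\sigma,\sigma)_{\Omega\times\Gamma} \gtrsim \|\sigma\|_{\Omega\times\Gamma}^2 = \|\sigma\|_\Sigma^2,
\end{align*}
with a constant independent of $\epsilon$, which is exactly the claim. The only conceptual point worth emphasizing in the write-up is that the weighting $\hat{\epsilon}_{\max}^{-1}$ in the $\Sigma$-norm is precisely paired with the weighting $\hat{\epsilon}_{\max}$ in $\|\cdot\|_U$ so that testing by elements of $\bm{U}$ legitimately annihilates $\mathfrak{D}\cdot\epsilon\sigma$; the same balance between $\|\cdot\|_R$ and the factor $\epsilon$ inside $\skw\epsilon\sigma$ handles the symmetry constraint. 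No inf-sup, Korn inequality, or trace argument is needed at this stage — those will enter only in the ensuing inf-sup analysis for $b$.
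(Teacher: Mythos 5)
Your argument is correct and follows essentially the same route as the paper: test the kernel condition against $\bm{v}=\mathfrak{D}\cdot\epsilon\sigma$ (up to the harmless positive weight $\hat{\epsilon}_{\max}^{-2}$), conclude $\normL{\hat{\epsilon}_{\max}^{-1}\mathfrak{D}\cdot\epsilon\sigma}=0$, and invoke the coercivity of $\mathfrak{A}$ from \eqref{Bounds A}. The additional observation that $\skw\epsilon\sigma=0$ is valid but not needed for the bound.
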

\begin{proof}
	We set $s = 0$ in condition \eqref{theorem condition}. The assumption holds for all $\bm{v} \in \bm{U}$, thus noting that $\mathfrak{D} \cdot \epsilon \sigma \in \prod_{i \in I} L^2(\Omega_i) = \bm{U}$ and $\hat{\epsilon}_{\max} > 0$, we obtain
	\begin{align}
		\normL{\hat{\epsilon}_{\max}^{-1} \mathfrak{D} \cdot \epsilon \sigma} = 0, \label{eq: b condition}
	\end{align}
	The proof is concluded by combining \eqref{eq: b condition} with the coercivity of $\mathfrak{A}$ from \eqref{Bounds A}.
\qed
\end{proof}

With the properties of $a$ proven, we continue by considering an inf-sup condition on the bilinear form $b$. 
This is shown in the following theorem, which relies on the constructions from Lemmas~\ref{lem: Poisson} and \ref{lem: Stokes}, presented afterwards.

\begin{theorem}[Inf-Sup]\label{Thm: inf-sup}
	The bilinear form $b$ satisfies for all, $( \bm{u}, r) \in \bm{U} \times R$,
	\begin{align*}
		\sup_{\tau \in \Sigma} \frac{b(\tau; \bm{u}, r)}{\norm{\tau}_\Sigma} 
		\gtrsim \norm{\bm{u}}_U + \norm{r}_R,
	\end{align*}
\end{theorem}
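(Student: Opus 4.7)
The plan is to use the classical two-field Fortin construction: given $(\bm{u}, r) \in \bm{U} \times R$, I would build a test stress $\tau \in \Sigma$ as a linear combination $\tau = \tau_1 + c\tau_2$ of two stresses produced by the two auxiliary lemmas, each responsible for controlling one component of $b(\tau; \bm{u}, r)$.

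Specifically, I would invoke Lemma~\ref{lem: Poisson} to obtain $\tau_1 \in \Sigma$ satisfying $\mathfrak{D} \cdot \epsilon \tau_1 = \hat{\epsilon}_{\max}^2 \bm{u}$ together with $\|\tau_1\|_\Sigma \lesssim \|\bm{u}\|_U$, and Lemma~\ref{lem: Stokes} to obtain $\tau_2 \in \Sigma$ satisfying $\mathfrak{D} \cdot \epsilon \tau_2 = 0$, $\skw \epsilon \tau_2 = \epsilon^2 r$, and $\|\tau_2\|_\Sigma \lesssim \|r\|_R$. The weightings $\hat{\epsilon}_{\max}^2$ and $\epsilon^2$ are dictated by the norms \eqref{norms}: they make $\inpo{\mathfrak{D} \cdot \epsilon \tau_1}{\bm{u}} = \|\bm{u}\|_U^2$ and $\inpo{\skw \epsilon \tau_2}{r} = \|r\|_R^2$ automatically, while matching the $\hat{\epsilon}_{\max}^{-1}$ weight on the divergence in $\|\cdot\|_\Sigma$ so that $\tau_1$ can be bounded back by $\|\bm{u}\|_U$. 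Because $\mathfrak{D}\cdot\epsilon\tau_2 = 0$, the second stress does not disturb the first component, and a direct expansion yields
\begin{align*}
	b(\tau; \bm{u}, r) = \|\bm{u}\|_U^2 + c \|r\|_R^2 + \inpo{\skw \epsilon \tau_1}{r}.
\end{align*}

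The cross term is then handled by Cauchy--Schwarz, $|\inpo{\skw \epsilon \tau_1}{r}| \le \normL{\tau_1} \normL{\epsilon r} \lesssim \|\bm{u}\|_U \|r\|_R$, followed by Young's inequality; taking $c$ large enough (but independent of the data and of $\epsilon$, $\gamma$, $h$) absorbs this term into the two squared norms, giving $b(\tau; \bm{u}, r) \gtrsim \|\bm{u}\|_U^2 + \|r\|_R^2$. Since by construction $\|\tau\|_\Sigma \lesssim \|\bm{u}\|_U + \|r\|_R$, dividing delivers the claimed inf-sup lower bound.

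The main obstacle is deferred entirely to the two auxiliary lemmas rather than to this combination argument. Lemma~\ref{lem: Poisson} requires a right inverse for the mixed-dimensional divergence $\mathfrak{D}\cdot \epsilon(\cdot)$ that is bounded from $\bm{U}$ into $\Sigma$ uniformly in $\epsilon$ and $\gamma$; this is precisely where the $\hat{\epsilon}_{\max}^{-1}$ weight in $\|\cdot\|_\Sigma$ becomes essential so that the lift respects \eqref{eps bounded by emax} across manifolds of differing dimension. Lemma~\ref{lem: Stokes} requires a divergence-free stress with a prescribed skew part, which on each manifold amounts to the classical relaxed-symmetry construction of \cite{arnold2006differential,awanou2013rectangular}, but which must here be assembled across codimension-one interfaces so that the jump contributions in $\mathfrak{D}\cdot\epsilon\tau_2$ vanish. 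Granted these ingredients, the theorem itself is just the short weighted Young-inequality argument above.
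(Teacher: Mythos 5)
Your proposal is correct and follows essentially the same route as the paper: both construct the test stress by summing the outputs of Lemma~\ref{lem: Poisson} and Lemma~\ref{lem: Stokes} and then bound $\norm{\tau}_\Sigma$ by $\norm{\bm{u}}_U + \norm{r}_R$. The only divergence is in the bookkeeping of the cross term: the paper feeds the corrected datum $r - \epsilon^{-1}\skw \eta$ into Lemma~\ref{lem: Stokes} so that $\skw(\eta+\xi)=\epsilon r$ and the term $\inpo{\skw \epsilon \eta}{r}$ cancels exactly (no scaling constant needed), whereas you keep the datum $r$ and absorb the cross term via Cauchy--Schwarz and Young's inequality with a sufficiently large, $\epsilon$-independent constant $c$ --- which is equally valid since your bound $\left|\inpo{\skw \epsilon \tau_1}{r}\right| \lesssim \norm{\bm{u}}_U \norm{r}_R$ carries an $\epsilon$-independent constant.
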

\begin{proof}
	The proof consists of constructing a suitable $\tau \in \Sigma$ for a given pair $( \bm{u}, r) \in \bm{U} \times R$. Its construction is based on constructing two auxiliary functions $\eta, \xi \in \Sigma$ using the techniques from Lemmas \ref{lem: Poisson} and \ref{lem: Stokes}. Setting $\tau$ as the sum of these two functions then yields the result.

	First, Lemma~\ref{lem: Poisson} allows us to construct $\eta \in \Sigma$ such that
	\begin{align}
		\mathfrak{D} \cdot \epsilon \eta &= \hat{\epsilon}_{\max}^2 \bm{u}, &
		\| \eta \|_{\Sigma} &\lesssim \| \bm{u} \|_U. \label{bound eta}
	\end{align}

	Secondly, we choose $\xi \in \Sigma$ using Lemma~\ref{lem: Stokes} with given $(r - \epsilon^{-1} \skw \eta) \in R$ such that
	\begin{subequations}
		\begin{align}
			\skw \xi
			&= \epsilon r - \skw \eta \label{constr wy}\\
			\mathfrak{D} \cdot \epsilon \xi
			&= 0 \\ 
			\norm{\xi}_\Sigma &\lesssim \norm{ r }_R + \norm{ \epsilon^{-1} \skw \eta}_R 
			= \norm{ r }_R + \norm{ \skw \eta}_{\Omega} 
			\le \norm{ r }_R + \norm{\eta}_\Sigma. \label{bound wy}
		\end{align}
	\end{subequations}
	
	By setting $\tau = \eta + \xi$, it follows that
	\begin{align}
		b(\tau; \bm{u}, r) 
		&= \inpo{\mathfrak{D} \cdot \epsilon \tau}{\bm{u}}
		+ \inpo{\skw \epsilon\tau}{r} \nonumber\\
		&= \inpo{\mathfrak{D} \cdot \epsilon \eta}{\bm{u}} + \inpo{\skw \eta +  \skw \xi}{ \epsilon r} \nonumber\\
		&= \normL{ \hat{\epsilon}_{\max} \bm{u} }^2 +  \norm{ \epsilon r }_{\Omega}^2 \nonumber\\
		&= \norm{ \bm{u} }_U^2 +  \norm{ r }_R^2 \label{bequals1}
	\end{align}
	Furthermore, the bound on $\tau$ is derived using \eqref{bound eta} and \eqref{bound wy}
	\begin{align}
		\| \tau \|_\Sigma
		\le \| \eta \|_\Sigma + \norm{ \xi}_\Sigma 
		\lesssim \norm{ \bm{u} }_U + \norm{ r }_R 
		. \label{bfinal}
	\end{align}
	The proof is concluded by combining \eqref{bequals1} and \eqref{bfinal}.
\qed
\end{proof}

\begin{lemma} \label{lem: Poisson}
	For each $\bm{u} \in \bm{U}$, a function $\eta \in \Sigma$ exists such that
	\begin{align}
			\mathfrak{D} \cdot \epsilon \eta &= \hat{\epsilon}_{\max}^2 \bm{u}, &
			\| \eta \|_{\Sigma} &\lesssim \| \bm{u} \|_U. \label{eq Poisson}
	\end{align}
\end{lemma}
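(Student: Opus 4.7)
I plan to construct $\eta$ by combining local Poisson problems on each $\Omega_i$ with $d_i\ge 1$ into a consistent design across dimensions. The structural observation enabling this is that on each $\Omega_i$ the target equation reads
\[
\nabla\cdot(\epsilon_i\eta_i)=(\hat{\epsilon}_{\max}|_i)^2\bm{u}_i+\jump{\bm{n}\cdot\hat{\epsilon}\hat{\eta}}_i,
\]
and the jump term only couples $\eta_i$ to $\eta_{\hatj}$ on the $(d_i{+}1)$-dimensional neighbors. The design principle I would adopt is to absorb the body force in-plane only at the top dimension, keeping $\nabla\cdot(\epsilon_i\eta_i)=0$ for $d_i<n$; the body loads on the lower-dimensional manifolds are then carried upward through the interfaces by the jump term and ultimately resolved by the divergence on $\Omega^n$.

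At each $\Omega_i$ with $d_i\ge 1$ I would set $\eta_i:=\nabla z_i$ componentwise, with $z_i$ solving a mixed-boundary Poisson problem on $\Omega_i$ with the designed right-hand side, $z_i=0$ on $\partial_u\Omega_i$, homogeneous Neumann data on $\partial_\sigma\Omega_i$, and prescribed Neumann data on each downward interface $\Gamma_{j'}$, $j'\in\check{J}_i$, chosen to carry the neighboring lower-dimensional load. Under mild geometric regularity of each $\Omega_i$ (e.g.\ convex polytopes, natural in the polygonal setup considered here), standard elliptic regularity gives $z_i\in H^2(\Omega_i)$, so $\eta_i\in(H^1(\Omega_i))^n\subset (H(\div,\Omega_i))^n$ and its normal traces on each $\Gamma_{j'}$ lie in $L^2$, placing $\eta_i$ in $\Sigma$. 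Well-posedness at the top dimension uses $|\partial_u\Omega_i|>0$; at lower dimensions with possibly empty $\partial_u\Omega_i$, solvability of the divergence-free problem is ensured by distributing the flux across interfaces so that the Neumann compatibility is satisfied.

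The main obstacle, and the real content of the lemma, is verifying the $\epsilon$-independent bound $\|\eta\|_\Sigma\lesssim\|\bm{u}\|_U$. Transporting the body load $(\hat{\epsilon}_{\max}|_i)^2\bm{u}_i$ across an interface $\Gamma_j\subseteq\partial\Omega_{\hatj}$ forces $\|\bm{n}\cdot\eta_{\hatj}\|_{L^2(\Gamma_j)}$ to scale like $(\hat{\epsilon}_{\max}|_i)^2/\epsilon_{\hatj}$ times $\|\bm{u}_i\|$, whereas the $U$-norm weighs $\bm{u}_i$ only by $\hat{\epsilon}_{\max}|_i$. Matching the two requires routing the load through an interface $j\in\hat{J}_i$ for which $\epsilon_{\hatj}=\hat{\epsilon}_{\max}|_i$; this is always possible by the very definition \eqref{eq: def epsmax} of $\hat{\epsilon}_{\max}$ as the maximum over adjacent $\epsilon_{\hatj}$. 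The geometric constraint \eqref{eps bounded by emax}, namely $\epsilon\lesssim\hat{\epsilon}_{\max}$, then ensures that the bounds propagate cleanly from dimension to dimension without accumulating inverse powers of $\epsilon$. Telescoping the local elliptic estimates across all dimensions closes the argument.
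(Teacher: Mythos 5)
Your overall architecture matches the paper's: a hierarchical construction sweeping up through the dimensions, with the load on each lower-dimensional manifold carried to its higher-dimensional neighbors through the jump term, and — crucially — routed through an interface $j \in \hat{J}_i$ on which $\hat{\epsilon} = \hat{\epsilon}_{\max}$ so that the weighted norms match without losing powers of $\epsilon$. That last observation is indeed the heart of the lemma and you have it exactly right. However, there is a genuine gap in your design principle of keeping $\nabla \cdot (\epsilon_i \eta_i) = 0$ for all $1 \le d_i < n$. On an intermediate-dimensional manifold $\Omega_i$ (say a two-dimensional plate for $n = 3$ with $\partial_u \Omega_i = \emptyset$, which the paper's assumptions permit since $|\partial_u \Omega_i| > 0$ is only required for $i \in I^n$), the normal data on the interfaces $\Gamma_{j'}$, $j' \in \check{J}_i$, are dictated \emph{pointwise} by the equation on the lower-dimensional neighbor $\Omega_{i'}$: the jump $\jump{\bm{n}\cdot\hat{\epsilon}\hat{\eta}}_{i'}$ must equal $-\hat{\epsilon}_{\max}^2 \bm{u}_{i'}$ almost everywhere, not merely on average. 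The only freedom is how to split this fixed demand among the sides $j' \in \hat{J}_{i'}$; the net flux that the adjacent plates must collectively supply is $\hat{\epsilon}_{\max}^2 \int_{\Omega_{i'}} \bm{u}_{i'}$, which is nonzero in general. At least one receiving plate therefore has nonzero net normal flux on $\check{J}_i$ and zero flux on $\partial_\sigma \Omega_i$, so the pure-Neumann, divergence-free problem you pose on it is incompatible. "Distributing the flux across interfaces" cannot repair this, because the distribution freedom lives on the source manifold's sides, not on the receiving manifold's boundary; note also that the upward interfaces $\hat{J}_i$ are not part of $\partial\Omega_i$ and enter only as a distributed source, so they cannot serve as an outflow boundary for the in-plane problem.

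The paper's remedy is to abandon divergence-freeness on the intermediate manifolds: $\eta_i$ is taken as any bounded $H(\div,\Omega_i)$-extension of the prescribed incoming data, and whatever in-plane divergence this produces is simply pushed upward by augmenting the outgoing interface datum on the chosen maximal-$\epsilon$ side to $\bm{\phi}_j = -\hat{\epsilon}_{\max}\bm{u}_i + \hat{\epsilon}_{\max}^{-1}\nabla\cdot\epsilon\,\eta_i$, so that $\nabla\cdot\epsilon\,\eta_i - \jump{\hat{\epsilon}\bm{\phi}}_i = \hat{\epsilon}_{\max}^2\bm{u}_i$ holds identically; the residual is then absorbed at the top dimension, where $|\partial_u\Omega_i|>0$ makes the mixed Poisson problem well-posed, and the constraint $\epsilon \lesssim \hat{\epsilon}_{\max}$ keeps the extra term $\hat{\epsilon}_{\max}^{-1}\nabla\cdot\epsilon\,\eta_i$ controlled. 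A secondary issue with your construction: setting $\eta_i = \nabla z_i$ and invoking $H^2$ regularity is not available here, since the Neumann data $\bm{\phi}_{j'}$ are only in $L^2(\Gamma_{j'})$ (they are built from $\bm{u} \in \bm{U} = \prod (L^2)^n$), so $z_i \in H^2(\Omega_i)$ cannot be expected; the $H(\div)$-extension operator with $L^2$ normal-trace control, as used in the paper, is the appropriate tool.
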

\begin{proof}
	Considering $\bm{u} \in \bm{U}$ given, the function $\eta$ is constructed hierarchically. For each dimension $d$, we first set an interface value $\bm{\phi}$ on $\Gamma^d$, followed by a suitable extension into $\Omega^d$.

\begin{enumerate}\setcounter{enumi}{-1}
	\item
	Given $i \in I^0$, we construct the adjacent interface functions $\bm{\phi}_j \in L^2(\Gamma_j)$ such $\bm{\phi}_j = - \hat{\epsilon}_{\max} \bm{u}_i$ for a chosen $j \in \hat{J}_i$ with $(\hat{\epsilon})|_{\Gamma_j} = (\hat{\epsilon}_{\max})|_{\Omega_i}$ and zero for all other $j \in \hat{J}_i$. Repeating this construction for all $i \in I^0$, it follows that
	\begin{subequations}
	\begin{align}
			-\jump{\hat{\epsilon} \bm{\phi}}_i &= \hat{\epsilon}_{\max}^2 \bm{u}_i, & \forall i &\in I^0 \label{constr psi^0}\\
			\norm{\bm{\phi}}_{\Gamma^0} &= \norm{\hat{\epsilon}_{\max} \bm{u}}_{\Omega^0}. \label{bound psi^0}
		\end{align}\end{subequations}

	\item We continue with $i \in I^1$ and perform the following two steps. First, the function $\eta_i$ is constructed as the bounded $H(\operatorname{div}, \Omega_i)$-extension of the given $\bm{\phi}_j$ with $j \in \check{J}_i$. We use the extension operator as described in \cite{quarteroni1999domain} (Section 4.1.2), giving us the properties
	\begin{subequations}
		\begin{align}
			(\bm{n} \cdot \eta_i)|_{\Gamma_j} &= \bm{\phi}_j, & \forall j &\in \check{J}_i,\\
			(\bm{n} \cdot \eta_i)|_{\partial \Omega_i \setminus \Gamma^0} &= 0, \\
			\norm{\eta}_{\Omega_i} + \norm{\nabla \cdot \eta}_{\Omega_i} &\lesssim 
			\sum_{j \in \check{J}_i} \norm{\bm{\phi}}_{\Gamma_j}. \label{bound eta^d}
		\end{align}
	\end{subequations}
	Secondly, we further define $\bm{\phi}$ onto $\Gamma_j$ with $j \in \hat{J}_i$. We choose a single $j \in \hat{J}_i$ where $(\hat{\epsilon})|_{\Gamma_j} = (\hat{\epsilon}_{\max})|_{\Omega_i}$ and set
	$\bm{\phi}_j = - \hat{\epsilon}_{\max} \bm{u}_i + \hat{\epsilon}_{\max}^{-1} \nabla \cdot \epsilon \eta_i$. For all other $j \in \hat{J}_i$, we set $\bm{\phi}_j = 0$. It then immediately follows that
	\begin{align}
		-\jump{\hat{\epsilon} \bm{\phi}}_i 
		&= \hat{\epsilon}_{\max}^2 \bm{u}_i 
		- \nabla \cdot \epsilon \eta_i, \label{constr psi^1}
	\end{align}
	Repeating these two steps for all $i \in I^1$ gives us the bound	
	\begin{align}
		\norm{\bm{\phi}}_{\Gamma^1} 
		&\le \norm{\hat{\epsilon}_{\max} \bm{u}}_{\Omega^1} + \norm{ \hat{\epsilon}_{\max}^{-1} \nabla \cdot \epsilon \eta}_{\Omega^1} \nonumber \\
		&\lesssim \norm{\hat{\epsilon}_{\max} \bm{u}}_{\Omega^1} 
		+ \norm{ \nabla \cdot \eta}_{\Omega^1}  \nonumber \\
		&\lesssim \norm{\hat{\epsilon}_{\max} \bm{u}}_{\Omega^1} 
		+ \norm{\bm{\phi}}_{\Gamma^0} \label{bound psi^1}
	\end{align}
	in which the second and third inequalities follow from \eqref{eps bounded by emax} and \eqref{bound eta^d}.

	\item 
	For $n = 3$, repeat the previous step for all $i \in I^2$ to obtain $\eta_i$ and $\bm{\phi}_j$ with $j \in \hat{J}_i$.

	\item 
	The construction of $\eta$ is finalized with its top-dimensional components $\eta_i$ with $i \in I^n$. Let the pair $(\eta_i, \tilde{\bm{v}}_i) \in (H(\div, \Omega_i))^n \times (L^2(\Omega_i))^n$ be the weak solution to the Poisson problem:
		\begin{subequations} \label{aux problem eta_i}
			\begin{align}
				\eta_i + \nabla \tilde{\bm{v}}_i &= 0,  \\
				\nabla \cdot \eta_i &= \bm{u}_i, & \label{constr eta_i} \\
				(\bm{n} \cdot \eta_i)|_{\Gamma_j} &= \bm{\phi}_j, & j \in \check{J}_i,\\
				(\bm{n} \cdot \eta_i)|_{\partial_\sigma \Omega_i} &= 0, \\
				(\tilde{\bm{v}}_i)|_{\partial_u \Omega_i} &= 0.
			\end{align}
		\end{subequations}	
	This problem is solved for all $i \in I^n$. We then recall that $\epsilon = \hat{\epsilon}_{\max} = 1$ in $\Omega^n$ and exploit the elliptic regularity of \eqref{aux problem eta_i} (see e.g. \cite{evans1998partial}) to obtain
	\begin{align}
		\norm{\eta}_{\Omega^n} 
		+ \norm{ \hat{\epsilon}_{\max}^{-1} \nabla \cdot \epsilon \eta}_{\Omega^n} 
		= \norm{\eta}_{\Omega^n} 
		+ \norm{ \nabla \cdot \eta}_{\Omega^n} 
		\lesssim
		\norm{\bm{\phi}}_{\Gamma^{n - 1}} 
		+ \norm{\bm{u}}_{\Omega^n}. \label{bound eta_i}
	\end{align}
\end{enumerate}

	With $\eta$ constructed, we consider its two main properties. First, by \eqref{constr psi^0}, \eqref{constr psi^1}, and \eqref{constr eta_i}, we have
	\begin{align}
		\mathfrak{D} \cdot \epsilon \eta
		&= \hat{\epsilon}_{\max}^2 \bm{u}. \label{constr eta}
	\end{align}
	Secondly, we find the following bound from \eqref{bound psi^0}, \eqref{bound eta^d}, \eqref{bound psi^1}, and \eqref{bound eta_i}:
	\begin{align}
		\| \eta \|_\Sigma^2 &= 
		\normL{\eta}^2 
		+ \norm{\bm{n} \cdot \eta}_{\Gamma}^2 
		+ \norm{\hat{\epsilon}_{\max}^{-1} \mathfrak{D} \cdot \epsilon \eta}_{\Omega}^2 
		\nonumber\\
		&\lesssim 
		\norm{\bm{\phi}}_{\Gamma}^2
		+ \norm{\hat{\epsilon}_{\max} \bm{u}}_{\Omega}^2 
		\lesssim \normL{\hat{\epsilon}_{\max} \bm{u}}^2
		= \norm{\bm{u}}_U^2, \label{bound eta psi}
	\end{align}
	thereby concluding the proof.
\qed
\end{proof}

Before introducing the second ingredient used in the proof of Theorem~\ref{Thm: inf-sup}, we require several key analytical tools, organized in the following diagram:
	\begin{equation} \label{eq: commuting_diagram}
		\begin{tikzcd}
			W \rar{\epsilon^{-1} \mathfrak{D} \times} \dar{\Xi} 
			&\Sigma \rar{\mathfrak{D} \cdot \epsilon} \dar{\skw \epsilon}
			&\bm{U} \\
			\Theta \rar{\widetilde{\mathfrak{D}} \cdot} & R
		\end{tikzcd}
	\end{equation}

The function spaces ($\Theta$ and $W$) and mappings ($\Xi$, $\widetilde{\mathfrak{D}} \cdot$, and $\mathfrak{D} \times$) are defined next. Let the auxiliary space $\Theta$ be given by
	\begin{align}
		\Theta 
		= \prod_{d = 2}^3 \prod_{i \in I^d} (H^1(\Omega_i))^{k_d \times d}.
	\end{align}
We emphasize that for an element $\theta \in \Theta$, this definition implies that $\theta_i$ is a 2-vector for $i \in I^2$ and a $3 \times 3$ tensor for $i \in I^3$.
Next, we follow \cite{awanou2013rectangular} by introducing the mapping $\Xi$ and its right-inverse $\Xi^{-1}$ as:
\begin{align*}
	(\Xi w)|_{\Omega_i} &= \left\{
	\begin{aligned}
		&(w_i)^T - \Tr(w_i)I, &i &\in I^3, \\
		&(w_{i, \|})^T, &i &\in I^2,
	\end{aligned} \right.
		\\
	(\Xi^{-1} \theta)|_{\Omega_i} &= \left\{
	\begin{aligned}
		&(\theta_i)^T - {\textstyle \frac{1}{2}} \Tr(\theta_i)I, &i &\in I^3, \\
		&[\theta_i,\ 0]^T, &i &\in I^2,
	\end{aligned} \right.
\end{align*}
with $w_{i, \|}$ the tangential components of $w_i$ with respect to $\Omega_i$. 
$W$ is defined as the space of functions that lie in the image of the inverse operator and have a mixed-dimensional curl in $\Sigma$, i.e.
\begin{align}
	W := \{ w \in \Xi^{-1} \Theta: \ \epsilon^{-1} \mathfrak{D} \times w \in \Sigma\}. 
\end{align}
We remark that for $w \in W$, $w_i$ is a 3-vector for $i \in I^3$ and a $3 \times 3$ tensor for $i \in I^3$.

Next, we introduce a divergence-like operator
$\widetilde{\mathfrak{D}} \cdot: \Theta \to R$ given by
\begin{align}
	\widetilde{\mathfrak{D}} \cdot \theta &= \left\{
	\begin{aligned}		
			\nabla \cdot \theta_i&, &i &\in I^3, \\
			\nabla \cdot \theta_i& - \check{\bm{n}}_i \cdot \jump{\bm{n} \cdot \hat{\theta}}_i, &i &\in I^2.
	\end{aligned}
	\right.
\end{align} 
Here, $\check{\bm{n}}_i$ is the unique unit vector normal to $\Omega_i$ that forms a positive orientation with the chosen basis of the tangential bundle. By definition, this divergence operator maps from $\Theta$ to $R$, and we emphasize that $\widetilde{\mathfrak{D}} \cdot \theta$ is a vector for $d = 3$ and a scalar for $d = 2$. 

Finally, the mixed-dimensional curl $(\mathfrak{D} \times )$ of $w \in W$ (see e.g. \cite{Licht,boon2017excalc}) is given by 
\begin{align}
	\mathfrak{D} \times w = \left\{ 
	\begin{aligned}		
			\nabla \times w_i&, & \forall i &\in I^3, \\
			\nabla^{\perp} w_i - \jump{\bm{n} \times \hat{w}}_i &, & \forall i &\in I^2, \\
			- \jump{\bm{n}^{\perp} \hat{w}}_i&, & \forall i &\in I^1.
	\end{aligned}
	\right.
\end{align}
Here, the superscript $\perp$ implies $[v_1, v_2]^\perp = [-v_2, v_1]$, e.g. $\nabla^\perp$ is a rotated gradient operator. We note that all differential operations are performed row-wise. Hence, for $n = 3$, the mixed-dimensional curl maps to a $3 \times 3$ tensor in $\Omega^3$, a $3 \times 2$ tensor in $\Omega^2$ (in local coordinates) and a 3-vector in $\Omega^1$ (in local coordinates). Thus, an exact correspondence with the function space $\Sigma$ is obtained, as reflected in the diagram.

\begin{lemma} \label{lem: commutativity}
	The operators in diagram \eqref{eq: commuting_diagram} enjoy the following two properties for all $w \in W$:
	\begin{align}
		\mathfrak{D} \cdot \mathfrak{D} \times w &= 0, &
		\skw \mathfrak{D} \times w &= \widetilde{\mathfrak{D}} \cdot \Xi w.
	\end{align}
\end{lemma}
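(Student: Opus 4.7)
The plan is to verify each identity dimension by dimension, reducing to classical vector-calculus identities on each manifold and then matching the interface contributions. Because the two claims have the same flavour, I would set up one dimensional decomposition and apply it to both in parallel; the whole argument boils down to the fact that the definitions of $\Xi$, $\widetilde{\mathfrak{D}}\cdot$, and the lower-dimensional pieces of $\mathfrak{D}\times$ are engineered precisely so that these identities hold.

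For $\mathfrak{D} \cdot \mathfrak{D} \times w = 0$, I would start at the top. On $\Omega_i$ with $i \in I^3$, $\mathfrak{D}\times$ is the ordinary curl applied row-wise to the tensor $w_i$, and its in-plane divergence vanishes by the classical identity $\nabla \cdot (\nabla \times \cdot) = 0$. Thus in $\Omega^3$ the only surviving piece of $\mathfrak{D}\cdot \mathfrak{D}\times w$ is the jump $-\jump{\bm{n}\cdot (\mathfrak{D}\times \hat w)}$ contributed to $\Omega^2$. On $\Omega^2$, the in-plane divergence of $\nabla^\perp w_i$ vanishes by the planar identity $\nabla \cdot \nabla^\perp = 0$, while $\nabla \cdot \jump{\bm{n}\times \hat w}$ can be rewritten, via a tangential Stokes-type identity on $\Gamma_j$, as exactly $\jump{\bm{n}\cdot (\nabla\times \hat w)}$, cancelling the incoming jump from $\Omega^3$. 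The same cascade, applied once more to the interfaces bordering $\Omega^1$ (where only jump terms remain), closes the argument; the regularity $\epsilon^{-1}\mathfrak{D}\times w \in \Sigma$ guarantees that all traces and jumps used here are well defined.

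For $\skw \mathfrak{D} \times w = \widetilde{\mathfrak{D}} \cdot \Xi w$, I would proceed analogously. On $\Omega_i$ with $i \in I^3$, the identity reduces to the classical Arnold--Falk--Winther formula $\skw(\nabla \times w_i) = \nabla \cdot\bigl((w_i)^T - \Tr(w_i) I\bigr)$, which is precisely the bulk action of $\widetilde{\mathfrak{D}}\cdot \Xi$ in $I^3$. On $\Omega_i$ with $i \in I^2$, the planar counterpart gives $\skw(\nabla^\perp w_i) = \nabla \cdot (w_{i,\|})^T$, matching $\nabla \cdot \theta_i$ with $\theta_i = \Xi w_i = (w_{i,\|})^T$. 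The remaining piece $-\jump{\bm{n}\times \hat w}$ of $\mathfrak{D}\times w$ in $\Omega^2$ contributes to $\skw$ an interface term that I would identify with $-\check{\bm{n}}_i \cdot \jump{\bm{n}\cdot \hat\theta}$ by using $\Xi \hat w = \hat \theta$ together with the relation between the cross product by $\bm n$ and the normal--normal trace of the associated tensor.

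I expect the main obstacle to be the tensor bookkeeping at the interfaces: checking that the jump operator $\jump{\bm{n}\cdot \hat{\cdot}}$ appearing in $\mathfrak{D}\cdot$, when composed with $\mathfrak{D}\times$, really produces the tangential-divergence contribution $\nabla\cdot \jump{\bm{n}\times \hat w}$ that appears in $\mathfrak{D}\times w$ on the lower-dimensional manifold, and, in the second identity, that the $\skw$ of the jump part equals $\check{\bm{n}}\cdot \jump{\bm{n}\cdot \hat \theta}$ under the identification $\theta = \Xi w$. Once these two interface reconciliations are verified (essentially, that $\Xi$ is the intertwiner making the diagram commute), the remainder is routine application of $\nabla\cdot\nabla\times = 0$ and the AFW identity row-wise.
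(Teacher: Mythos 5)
Your proposal is correct, and for the second identity it follows essentially the same route as the paper: the row-wise Arnold--Falk--Winther identities $\skw_3 \nabla \times w_i = \nabla \cdot \Xi_3 w_i$ and $\skw_2 \nabla^\perp w_i = \nabla \cdot \Xi_2 w_i$ in the bulk, combined with the interface reconciliation $\skw_2 \jump{\bm{n} \times \hat{w}}_i = \check{\bm{n}}_i \cdot \jump{\bm{n} \cdot \Xi_3 \hat{w}}_i$, which the paper obtains from the relation $(\skw_2 M)|_{\Omega_i} = \check{\bm{n}}_i \cdot \skw_3 M$ for $M \in \mathbb{R}^{3\times 3}$. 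Where you genuinely diverge is the first identity: the paper disposes of $\mathfrak{D} \cdot \mathfrak{D} \times w = 0$ in one line by observing that the top row of the diagram consists of operators from the mixed-dimensional De Rham complex of \cite{boon2017excalc}, so exact forms are closed; you instead verify it directly by a dimension-by-dimension cascade, using $\nabla \cdot \nabla \times = 0$ and $\nabla \cdot \nabla^\perp = 0$ in each bulk manifold and the tangential identity $\nabla_{\Gamma} \cdot (\bm{n} \times \hat{w}) = -\bm{n} \cdot (\nabla \times \hat{w})$ to cancel the incoming jump terms. Your cascade is a valid and more self-contained argument (it makes visible exactly which surface-calculus identity drives the cancellation, and extends correctly down to $\Omega^1$ and $\Omega^0$ where only jump contributions survive), at the cost of more bookkeeping; the paper's citation is shorter but relies on the reader accepting the complex property wholesale. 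No gap in either half, though in a full write-up you would need to carry out the two interface reconciliations you flag as "the main obstacle," since they are precisely the content of the lemma beyond the classical identities.
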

\begin{proof}
	The top row of \eqref{eq: commuting_diagram} uses the differential operators from the mixed-dimensional De Rham complex \cite{boon2017excalc}. The first equality then follows from the fact that exact forms are closed. It remains to show commutativity. By the definition of $\Xi$, see e.g. \cite{awanou2013rectangular,Boffi}, we have 
	\begin{align*}
		\skw_3 \nabla \times w_i &= \nabla \cdot \Xi_3 w_i, & \forall i &\in I^3, \\
		\skw_2 \nabla^\perp w_i &= \nabla \cdot \Xi_2 w_i, & \forall i &\in I^2,
	\end{align*} 
	in which the subscript $d$ denotes a restriction of the operator to $\Omega^d$. Furthermore, we note that on $\Omega_i$ with $i \in I^2$, the skw operator evaluates the asymmetry with respect to the tangent bundle of $\Omega_i$. In turn, we have for $M \in \mathbb{R}^{3 \times 3}$ that $(\skw_2 M)|_{\Omega_i} = \check{\bm{n}}_i \cdot \skw_3 M$. This gives us
		\begin{align*} 
			(\skw \mathfrak{D} \times w)|_{\Omega_i}
			&= 	
			\skw_3 \nabla \times w_i
			= \nabla \cdot \Xi_3 w_i
			= (\widetilde{\mathfrak{D}} \cdot \Xi w)|_{\Omega_i}
			, & \forall i &\in I^3,\\
			(\skw \mathfrak{D} \times w)|_{\Omega_i}
			&= \skw_2 (\nabla^{\perp} w_i - \jump{\bm{n} \times \hat{w}}_i) \\
			&= \nabla \cdot \Xi_2 w_i - \check{\bm{n}}_i \cdot \jump{\bm{n} \cdot \Xi_3 \hat{w}}_i \\
			&= (\widetilde{\mathfrak{D}} \cdot \Xi w)|_{\Omega_i}, & \forall i &\in I^2.
		\end{align*}
\qed
\end{proof}

\begin{lemma} \label{lem: Stokes}
	Given $r \in R$, a function $\xi \in \Sigma$ exists such that
	\begin{align}
			\mathfrak{D} \cdot \epsilon \xi &= 0, &
			\skw \xi &= \epsilon r, &
			\| \xi \|_{\Sigma} &\lesssim \| r \|_R. \label{eq Stokes}
	\end{align}
\end{lemma}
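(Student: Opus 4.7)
The strategy I would follow exploits the mixed-dimensional complex \eqref{eq: commuting_diagram} together with Lemma~\ref{lem: commutativity}. Given $r\in R$, the plan is first to produce $w\in W$ satisfying $\widetilde{\mathfrak{D}}\cdot\Xi w = \epsilon^2 r$ with a controlled bound, and then to set $\xi := \epsilon^{-1}\mathfrak{D}\times w$. Two of the three required properties then come for free from Lemma~\ref{lem: commutativity}: exactness yields $\mathfrak{D}\cdot\epsilon\xi = \mathfrak{D}\cdot\mathfrak{D}\times w = 0$, and since $\epsilon$ is a piecewise constant scalar, commutativity gives
\begin{align*}
    \skw\xi \;=\; \epsilon^{-1}\skw\mathfrak{D}\times w \;=\; \epsilon^{-1}\widetilde{\mathfrak{D}}\cdot\Xi w \;=\; \epsilon r,
\end{align*}
which is exactly the second conclusion of \eqref{eq Stokes}.

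The heart of the argument therefore reduces to constructing $w$, which I would do by a hierarchical sweep up through the dimensions, mirroring Lemma~\ref{lem: Poisson}. Since $R$ is supported only where $d \ge 2$, the construction begins on $\Omega^2$: for each $i\in I^2$ I would first choose an interface value of $\hat w$ on the $\Gamma_j$, $j\in\hat J_i$ (concentrating the mass on a single adjacent face so as to absorb the $\check{\bm{n}}_i\cdot\jump{\bm{n}\cdot\hat\theta}_i$ contribution, exactly as in Lemma~\ref{lem: Poisson}) and then solve a surface Stokes-type problem for $\theta_i = \Xi_2 w_i \in (H^1(\Omega_i))^2$ whose divergence absorbs $\epsilon_i^2 r_i$ and which matches the prescribed boundary data. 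On $\Omega^3$ I would then solve a standard bulk Stokes problem for $\theta_i\in(H^1(\Omega_i))^{3\times 3}$, with $\nabla\cdot\theta_i = r_i$ (recalling $\epsilon\equiv 1$ on $\Omega^n$), inheriting the normal boundary traces fixed in the previous step and taking vanishing normal data on $\partial_\sigma\Omega_i$; elliptic regularity for Stokes then supplies the requisite $H^1$ bound. Setting $w=\Xi^{-1}\theta$ and $\xi=\epsilon^{-1}\mathfrak{D}\times w$ closes the construction.

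What remains is to verify that the resulting $\xi$ genuinely lies in $\Sigma$---that is, that it has an $L^2$ normal trace on every $\Gamma_j$ and vanishing normal trace on $\partial_\sigma\Omega$---and to assemble the step-by-step $H^1$ estimates on $\theta$ into the desired weighted bound $\|\xi\|_\Sigma \lesssim \|r\|_R$. I expect the principal technical difficulty to be the scaling bookkeeping in $\epsilon$ and $\hat\epsilon_{\max}$, so that constants remain independent of the thin-inclusion parameters: the $\epsilon^{-1}$ prefactor in the definition of $\xi$ must be compensated exactly by the $\epsilon^2$ on the right-hand side of $\widetilde{\mathfrak{D}}\cdot\Xi w = \epsilon^2 r$ and by the natural $\epsilon$-weighting of the surface solvers, with the geometric constraint \eqref{eps bounded by emax} (together with $\epsilon=\hat\epsilon_{\max}=1$ on $\Omega^n$) providing the uniform control needed to pass estimates between successive levels of the hierarchy.
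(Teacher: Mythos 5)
Your overall strategy is the paper's: construct $\theta = \Xi w \in \Theta$ with $\widetilde{\mathfrak{D}} \cdot \theta = \epsilon^2 r$, set $\xi = \epsilon^{-1} \mathfrak{D} \times w$, and read off $\mathfrak{D} \cdot \epsilon \xi = 0$ and $\skw \xi = \epsilon r$ from Lemma~\ref{lem: commutativity}; the dimensional sweep (interface data on $\Gamma^2$, then interiors of $\Omega^2$, then $\Omega^3$) is also the same. The genuine gap is in your construction on $\Omega^2$. You propose to prescribe the interface values pointwise, ``exactly as in Lemma~\ref{lem: Poisson}'', and to let the interior surface Stokes solve ``absorb $\epsilon_i^2 r_i$'' through its divergence. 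Neither half works as stated. First, a Stokes velocity $\theta_i \in (H_0^1(\Omega_i))^2$ --- or any velocity with fully prescribed boundary values --- obeys the compatibility condition $\int_{\Omega_i} \nabla \cdot \theta_i = \int_{\partial \Omega_i} \bm{n} \cdot \theta_i$, so its divergence can only realize the mean-free part of a source; the mean of $\epsilon^2 r_i$ must be carried entirely by the jump term $-\check{\bm{n}}_i \cdot \jump{\bm{n} \cdot \hat{\theta}}_i$. Second, the interface function cannot be prescribed pointwise in terms of $r_i$ as in Lemma~\ref{lem: Poisson}: it must lie in $H_0^1(\Gamma_j)$ (vanishing at intersections and extremities) so that it is admissible Dirichlet data for the adjacent three-dimensional Stokes problems, whereas $r_i$ is merely $L^2$. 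The paper resolves both issues simultaneously by determining a scalar $\phi \in \prod_{j} H_0^1(\Gamma_j)$ through a constrained $H^1$-minimization that enforces only the mean condition $-\Pi_{\mathbb{R}_i} \jump{\phi}_i = \Pi_{\mathbb{R}_i} \epsilon^2 r_i$, and then solving the interior Stokes problem with the mean-free source $(I - \Pi_{\mathbb{R}_i})(\epsilon^2 r_i + \jump{\phi}_i)$. Without this mean/fluctuation splitting your step on $\Omega^2$ has, in general, no solution.

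A secondary point concerns $\Omega^3$: you impose the inherited traces on $\Gamma_j$ together with vanishing normal data on $\partial_\sigma \Omega_i$ and $\nabla \cdot \theta_i = r_i$. If essential data covers all of $\partial \Omega_i$, the same divergence compatibility constraint reappears and is not satisfied for arbitrary $r_i$. The paper avoids this by leaving $\partial \Omega_i \setminus \Gamma^2$ with a natural (pressure) boundary condition, which is legitimate because $|\partial_u \Omega_i| > 0$ is assumed for $i \in I^n$; you should do the same. The remaining scaling bookkeeping in $\epsilon$ and $\hat{\epsilon}_{\max}$ that you flag does go through as you anticipate --- the $\epsilon^{-1}$ in the definition of $\xi$ is compensated by the $\epsilon^2$ in $\widetilde{\mathfrak{D}} \cdot \theta = \epsilon^2 r$, yielding $\|\xi\|_\Sigma \lesssim \normL{\epsilon r} = \|r\|_R$ --- once the construction above is repaired.
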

\begin{proof}
	We give the proof for $n = 3$, the case $n = 2$ being simpler. 	
	The strategy is to exploit the properties shown in Lemma~\ref{lem: commutativity} and first construct a bounded $\theta \in \Theta$ such that $\widetilde{\mathfrak{D}} \cdot \theta = \epsilon^2 r$. Then, by setting $w = \Xi^{-1} \theta$ and $\xi = \epsilon^{-1} \mathfrak{D} \times w$, we obtain two of the desired properties
	\begin{align} \label{eq: desired}
		\mathfrak{D} \cdot \epsilon \xi &= 
		\mathfrak{D} \cdot \mathfrak{D} \times w = 
		0, &
		\skw \xi &= 
		\epsilon^{-1} \skw \mathfrak{D} \times w = 
		\epsilon^{-1} \widetilde{\mathfrak{D}} \cdot \theta = 
		\epsilon r.
	\end{align}
	The estimate will then follow from the boundedness of $\theta$.

	The construction of $\theta$ proceeds according to the following three steps, consisting of an interface function $\phi \in H^1(\Gamma^2)$ which serves as a source function for $\theta_i$ with $i \in I^2$ and a boundary condition for $\theta_i$ with $i \in I^3$.

	\begin{enumerate}
		\item
		We start by defining a scalar function $\phi$ in the trace space $H^1(\Gamma^2)$. We let $\phi$ vanish at all intersections and extremities, i.e. $\phi$ is in the function space $\Phi$ given by
		\begin{align}
			\Phi &:= \prod_{i \in I^2} \prod_{j \in \hat{J}_i} H_0^1(\Gamma_j).
		\end{align}
		Now, let $\phi$ be the solution to the following minimization problem:
		\begin{align}
				\min_{\varphi \in \Phi} &\ \tfrac{1}{2} \norm{\varphi}_{H^1(\Gamma^2)}^2 &
			 	& \text{subject to } \ 
				\Pi_{\mathbb{R}_i} (\jump{\varphi}_i + \epsilon^2 r_i) = 0, \ \forall i \in I^2.
			\end{align}
		with $\Pi_{\mathbb{R}_i}$ the projection onto constants on $\Omega_i$. Due to the regularity of this problem and the imposed constraint, we have
		\begin{subequations}
			\begin{align}
				- \Pi_{\mathbb{R}_i} \jump{\phi}_i &= \Pi_{\mathbb{R}_i} \epsilon^2 r_i,  & \forall i &\in I^2 \label{constraints V}\\
				\| \phi \|_{H^1(\Gamma^2)} &\lesssim \| \epsilon^2 r \|_{\Omega^2}. \label{Stokes bound theta}
			\end{align}
		\end{subequations}

		\item
		For each $i \in I^2$, we construct a function $\theta_i$ using $\phi$ as a source function. Specifically, let $(\theta_i, p_i) \in (H_0^1(\Omega_i))^2 \times L^2(\Omega_i)$ be the weak solution to the Stokes problem:
		\begin{subequations}
		\begin{align} \label{eq: Stokes n-1}
				\nabla \cdot (\nabla \theta_i) - \nabla p_i &= 0 \\
				\nabla \cdot \theta_i &= (I - \Pi_{\mathbb{R}_i})(\epsilon^2 r_i + \jump{\phi}_i), \label{div r n-1}\\
				(\theta_i)|_{\partial \Omega_i} &= 0.\
		\end{align}
		\end{subequations}

		The following bound is then satisfied from the regularity of \eqref{eq: Stokes n-1} (see e.g. \cite{evans1998partial}) combined with \eqref{Stokes bound theta}
		\begin{align}
			\norm{\theta}_{H^1(\Omega^2)} &\lesssim \norm{\epsilon^2 r}_{\Omega^2} + \norm{\phi}_{\Gamma^2}
			\lesssim \norm{\epsilon^2 r}_{\Omega^2} \label{Stokes bound wn-1}
		\end{align}

		\item
		To finalize $\theta \in \Theta$, we create $\theta_i$ for $i \in I^3$ using $\phi$ from the first step as a boundary condition. Let $\theta_i \in (H^1(\Omega_i))^{3 \times 3}$ and an auxiliary pressure variable $p_i \in (L^2(\Omega_i))^3$ be the weak solution to the following Stokes problem:
		\begin{subequations} \label{eq: Stokes}
			\begin{align}
				\nabla \cdot (\nabla \theta_i) - \nabla p_i &= 0, \\
				\nabla \cdot \theta_i &= r_i,  \label{div r n}\\
				(\theta_i)|_{\Gamma_j} &= \phi_j \check{\bm{n}}_i \bm{n}_j^T, & j &\in \check{J}_i, \\
				(p_i)|_{\partial \Omega_i \setminus \Gamma^2} &=0.
			\end{align}
		\end{subequations}

		Recall that $\check{\bm{n}}_i$, the unique normal vector of $\Omega_i$, and $\bm{n}_j$, the normal vector defined on $\Gamma_j$, are equal up to sign. This problem is well-posed since $\partial \Omega_i \setminus \Gamma^2$ has positive measure, for each $i \in I^3$, by assumption. 
		We have the following bound due to the regularity of the Stokes problems and the fact that $\epsilon = 1$ in $\Omega^3$
		\begin{align}
			\norm{\theta}_{H^1(\Omega^3)} 
			\lesssim \norm{r}_{\Omega^3} + \norm{\phi}_{\Gamma^2}
			\lesssim \norm{\epsilon^2 r}_{\Omega^3} + \norm{\epsilon^2 r}_{\Omega^2}. \label{Stokes bound wn}
		\end{align}
		\end{enumerate}

		Combining all $\theta_i$ from the final two steps gives us $\theta \in \Theta$. We first note that the properties \eqref{constraints V}, \eqref{div r n-1}, and \eqref{div r n} result in
		\begin{align*}
			(\widetilde{\mathfrak{D}} \cdot \theta)|_{\Omega_i}
			&= 
			\nabla \cdot \theta_i - \check{\bm{n}}_i \cdot \jump{\bm{n} \cdot \theta_i} \nonumber \\
			&= (I - \Pi_{\mathbb{R}_i})(\epsilon^2 r_i + \jump{\phi}_i) - \jump{\phi}_i
			= \epsilon^2 r_i
			= (\epsilon^2 r)|_{\Omega_i}, 
			& \forall i &\in I^2, \\
			(\widetilde{\mathfrak{D}} \cdot \theta)|_{\Omega_i}
			&= 
			\nabla \cdot \theta_i 
			= r_i 
			= (\epsilon^2 r)|_{\Omega_i},
			& \forall i &\in I^3. 
		\end{align*}
		Hence, we have $\widetilde{\mathfrak{D}} \cdot \theta = \epsilon^2 r$ and it follows from \eqref{eq: desired} that setting $w = \Xi^{-1} \theta$ and $\xi = \epsilon^{-1} \mathfrak{D} \times w$ provides the first two properties. The bound follows due to \eqref{Stokes bound theta}, \eqref{Stokes bound wn-1} and \eqref{Stokes bound wn}
		\begin{align}
			\norm{\xi}_\Sigma^2
			&= \normL{\epsilon^{-1} \mathfrak{D} \times w}^2 + \normLg{\bm{n} \cdot (\epsilon^{-1} \mathfrak{D} \times w)}^2 \nonumber\\
			&\lesssim \| \epsilon^{-1} w \|_{H^1(\Omega)}^2 
				+ \| \hat{\epsilon}^{-1} \hat{w} \|_{H^1(\Gamma)}^2 \nonumber\\
			&\lesssim \| \epsilon^{-1} \theta \|_{H^1(\Omega)}^2 
				+ \| \hat{\theta} \|_{H^1(\Gamma^2)}^2 \nonumber\\
			&= \| \epsilon^{-1} \theta \|_{H^1(\Omega)}^2 
				+ \| \phi \|_{H^1(\Gamma^2)}^2 \nonumber\\
			&\lesssim \norm{r}_R^2, \label{eq: stokes2}
		\end{align}
		as desired.
\qed
\end{proof}

With the proven properties of the bilinear forms $a$ and $b$, the main result of this section is summarized by the following theorem:
\begin{theorem}
	Problem \eqref{weakform} is well-posed with respect to the norms \eqref{norms}. That is, a unique solution exists satisfying the bound
	\begin{align}
		\norm{\sigma}_\Sigma + \norm{\bm{u}}_U + \norm{r}_R \lesssim \norm{\hat{\epsilon}_{\max} \bm{g}_u}_{H^{\frac{1}{2}}(\partial_u \Omega)} + \normL{\epsilon \bm{f}}
	\end{align}
\end{theorem}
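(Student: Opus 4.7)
The plan is to apply the standard Brezzi saddle-point framework directly. The three structural ingredients required are already in hand: continuity of $a$ and $b$ from Lemma~\ref{lem: continuity}, coercivity of $a$ on the kernel of $b$ from Theorem~\ref{Thm: ellipticity}, and the inf-sup condition on $b$ from Theorem~\ref{Thm: inf-sup}. Together they guarantee existence and uniqueness of $(\sigma, \bm{u}, r) \in \Sigma \times \bm{U} \times R$ and a stability estimate of the abstract form
\begin{align*}
\norm{\sigma}_\Sigma + \norm{\bm{u}}_U + \norm{r}_R \lesssim \norm{F}_{\Sigma'} + \norm{G}_{(\bm{U}\times R)'},
\end{align*}
where $F(\tau) := (\bm{g}_u, \bm{n} \cdot \epsilon \tau)_{\partial_u \Omega}$ and $G(\bm{v}, s) := (\epsilon^2 \bm{f}, \bm{v})_\Omega$ are the right-hand side functionals of \eqref{weakform}. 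The remaining task is to bound these two functionals in terms of the data norms appearing in the statement.

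The bulk forcing is straightforward. By Cauchy--Schwarz together with the geometric constraint \eqref{eps bounded by emax}, I would write
\begin{align*}
|G(\bm{v}, s)| = |(\epsilon \bm{f}, \epsilon \bm{v})_\Omega| \le \norm{\epsilon \bm{f}}_\Omega \norm{\epsilon \bm{v}}_\Omega \lesssim \norm{\epsilon \bm{f}}_\Omega \norm{\hat{\epsilon}_{\max} \bm{v}}_\Omega = \norm{\epsilon \bm{f}}_\Omega \norm{\bm{v}}_U,
\end{align*}
and since $G$ is independent of $s$, this gives $\norm{G}_{(\bm{U}\times R)'} \lesssim \norm{\epsilon \bm{f}}_\Omega$.

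For the boundary functional $F$, I would work subdomain by subdomain, invoking the $H^{1/2}$--$H^{-1/2}$ duality
\begin{align*}
\left|(\bm{g}_u, \bm{n} \cdot \epsilon \tau)_{\partial_u \Omega_i}\right| \le \norm{\hat{\epsilon}_{\max} \bm{g}_u}_{H^{1/2}(\partial_u \Omega_i)}\, \norm{\hat{\epsilon}_{\max}^{-1} \bm{n}\cdot\epsilon\tau}_{H^{-1/2}(\partial_u \Omega_i)},
\end{align*}
and then applying the standard $H(\div)$ normal-trace estimate $\norm{\bm{n}\cdot\zeta}_{H^{-1/2}(\partial\Omega_i)} \lesssim \norm{\zeta}_{\Omega_i} + \norm{\nabla \cdot \zeta}_{\Omega_i}$ to $\zeta := \hat{\epsilon}_{\max}^{-1}\epsilon\tau$. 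The $L^2$ factor is absorbed by $\norm{\tau}_{\Omega_i}$ through \eqref{eps bounded by emax}, and the divergence factor is handled by decomposing $\hat{\epsilon}_{\max}^{-1}\nabla \cdot \epsilon\tau = \hat{\epsilon}_{\max}^{-1}\mathfrak{D}\cdot\epsilon\tau + \hat{\epsilon}_{\max}^{-1}\jump{\bm{n}\cdot\hat{\epsilon}\hat\tau}$, so that the two summands are controlled by the $\mathfrak{D}\cdot\epsilon\tau$ term and the $\bm{n}\cdot\tau$ term in $\norm{\tau}_\Sigma$, respectively. Summing over $i$ and pairing with the full data norm then yields $|F(\tau)| \lesssim \norm{\hat{\epsilon}_{\max} \bm{g}_u}_{H^{1/2}(\partial_u \Omega)} \norm{\tau}_\Sigma$. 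Substituting the two functional bounds into the abstract Brezzi estimate gives the stated inequality.

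The main obstacle I expect is the careful bookkeeping of the scalings $\epsilon$ and $\hat{\epsilon}_{\max}$ in the weighted trace argument, in particular making sure that the jump $\jump{\bm{n}\cdot\hat{\epsilon}\hat\tau}$ produced by rewriting $\nabla\cdot\epsilon\tau$ in terms of $\mathfrak{D}\cdot\epsilon\tau$ is dominated by the $\norm{\bm{n}\cdot\tau}_\Gamma$ contribution in $\norm{\tau}_\Sigma$ with a constant independent of $\epsilon$. Everything else is routine application of Brezzi's theorem.
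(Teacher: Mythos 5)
Your proposal is correct and follows essentially the same route as the paper: both reduce the theorem to continuity of the two right-hand-side functionals and then invoke standard Brezzi theory together with Lemma~\ref{lem: continuity}, Theorem~\ref{Thm: ellipticity}, and Theorem~\ref{Thm: inf-sup}. Your treatment of the boundary term (duality, normal-trace estimate for $\hat{\epsilon}_{\max}^{-1}\epsilon\tau$, and the split of $\nabla\cdot\epsilon\tau$ into $\mathfrak{D}\cdot\epsilon\tau$ plus the jump, with the jump absorbed via $\hat{\epsilon}\le\hat{\epsilon}_{\max}$) is exactly the paper's argument, as is the weighted Cauchy--Schwarz bound on the body-force term.
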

\begin{proof}
	It suffices to show continuity of the right-hand side of \eqref{weakform} with respect to the norms above. For that purpose, we derive the following bound on the first term using Cauchy-Schwarz and a trace inequality:
	\begin{align}
		(\bm{g}_u, \bm{n} \cdot \epsilon \tau)_{\partial_u \Omega} 
		&\lesssim \norm{\hat{\epsilon}_{\max} \bm{g}_u}_{H^{\frac{1}{2}}(\partial_u \Omega)} \norm{\hat{\epsilon}_{\max}^{-1} \epsilon \tau}_{H(\div, \Omega)} \nonumber\\
		&\lesssim \norm{\hat{\epsilon}_{\max} \bm{g}_u}_{H^{\frac{1}{2}}(\partial_u \Omega)} (\normL{\tau} + \normL{ \hat{\epsilon}_{\max}^{-1} \mathfrak{D} \cdot \epsilon \tau} + \normL{ \hat{\epsilon}_{\max}^{-1} \jump{\bm{n} \cdot \hat{\epsilon} \tau}}) \nonumber\\
		&\lesssim \norm{\hat{\epsilon}_{\max} \bm{g}_u}_{H^{\frac{1}{2}}(\partial_u \Omega)} \norm{\tau}_\Sigma.
	\end{align}
	Moreover, from \eqref{eps bounded by emax}, the second term is bounded as follows
	\begin{align}
		\inpo{\epsilon^2 \bm{f}}{\bm{v}} 
		&\le \normL{\hat{\epsilon}_{\max}^{-1} \epsilon^2 \bm{f}} \normL{\hat{\epsilon}_{\max} \bm{v}}
		\lesssim \normL{\epsilon \bm{f}} \norm{\bm{v}}_U
	\end{align}
	The result now follows readily from these two estimates, Theorems \ref{Thm: ellipticity} and \ref{Thm: inf-sup}, and standard saddle point theory \cite{Boffi}.
\qed
\end{proof}

\section{Discretization} \label{sec: Discretization}

In this section, we discretize the continuous problem \eqref{weakform} using conforming finite elements. The notion of conformity and the choice of mixed finite element spaces is discussed in Section~\ref{sub: Discrete Spaces} and the resulting discretized problem is presented and analyzed in Section~\ref{sub: Discrete Problem}.

\subsection{Discrete Spaces} \label{sub: Discrete Spaces}

For each $i \in I$, we introduce a shape-regular, simplicial grid $\Omega_{i, h}$ which tessellates $\Omega_i$. The union of meshes of a given dimension $d$ (with $0 \le d \le n$) is denoted by $\Omega_h^d = \cup_{i \in I^d} \Omega_{i, h}$ and we define $\Omega_h = \cup_{i \in I} \Omega_{i, h}$. We let the grid respect all lower-dimensional features and be matching across all interfaces. The tesselation of $\Gamma$ is thus given by $\Gamma_h = \Gamma \cap \partial \Omega_h$. Moreover, there is an equivalence between $\Gamma_{j, h}$ and $\Omega_{i, h}$ for all $j \in \hat{J}_i$. The typical mesh size is denoted by $h$ and we use $h$ as a subscript to indicate the discretized counterpart of functions and function spaces.

With the aim of obtaining a stable and conforming method, we search for a discrete solution in subspaces of the function spaces defined in Section~\ref{sec:weak_form}. We choose discrete function spaces $(\Sigma_h, \bm{U}_h, R_h)$ on the grid $\Omega_h$ according to the following three conditions:
\begin{enumerate}[label = (S{\arabic*})] \label{S}
	\item The finite element spaces are conforming, i.e. \label{S: conformity}
	\begin{align*}
		\Sigma_h &\subset \Sigma, & \bm{U}_h &\subset \bm{U}, & R_h &\subset R.
	\end{align*}
	\item $\Sigma_h$ and $\bm{U}_h$ are such that for each $i \in I$: \label{S: Poisson}
	\begin{align*}
		\nabla \cdot \Sigma_{i, h} 
		&\subseteq \bm{U}_{i, h} &
		&\text{and} &
		(\bm{n} \cdot \Sigma_{\hatj, h})|_{\Gamma_j}
		&= \bm{U}_{i, h}, &
		\forall j \in \hat{J}_i.
	\end{align*}
	\item A mixed-dimensional, finite element space $W_h$ exists such that 
	\begin{enumerate}
		\item $\mathfrak{D} \times W_h \subseteq \Sigma_h$.
		\item $(\Xi W_{i, h}) \times R_{i, h}$ forms a stable pair for the two-dimensional Stokes problem for each $i \in I^2$.
	\end{enumerate}
	\label{S: W_h}
	\item $\Sigma_{i, h} \times \bm{U}_{i, h} \times R_{i, h}$ forms a stable triplet for three-dimensional, mixed elasticity for each $i \in I^3$. \label{S: stable triplet}
\end{enumerate}

We provide an exemplary family of finite elements satisfying all four conditions. This choice is most concisely described using the notation of finite element exterior calculus \cite{AFW_FEEC}. A translation to more conventional nomenclature is provided afterwards, for convenience. Given a polynomial degree $k \ge 0$, let
\begin{subequations}\label{eq: Finite Elements}
	\begin{align} 
		\Sigma_h &= \prod_{d = 1}^n \prod_{i \in I^d} 
			\left( P_{k + n - d + 1} \Lambda^{d - 1}(\Omega_{i, h}) \right)^n, \\
		\bm{U}_h &= \prod_{d = 0}^n \prod_{i \in I^d} 
			\left( P_{k + n - d} \Lambda^d(\Omega_{i, h}) \right)^n, \\
		R_h &= \prod_{d = 2}^n \prod_{i \in I^d} 
			\left( P_{k + n - d} \Lambda^d(\Omega_{i, h}) \right)^{k_d}.
	\end{align}

In other words, for $n = 3$: $\Sigma_{i, h}$ with $i \in I^3$ corresponds to three rows of Nedelec elements of the second kind ($N2^f_{k + 1}$ \cite{Nedelec}) with degrees of freedom on the faces. For $i \in I^2$, it is three rows of Brezzi-Douglas-Marini elements ($BDM_{k + n - 1}$ \cite{brezzi1985two}). Finally $\Sigma_{i, h}$ with $i \in I^1$ is given by a triplet of continuous Lagrange elements ($P_{k + 2}$). The spaces $\bm{U}_{i, h}$ and $R_{i, h}$ are defined for $i \in I$ as three and $k_{d_i}$ rows, respectively, of discontinuous Lagrange elements ($P_{- (k + n - d_i)}$). 

In this case, the auxiliary space $W_h$ of \ref{S: W_h} is explicitly given by
	\begin{align} 
		W_h &= \prod_{d = 2}^n \prod_{i \in I^d} 
			\left( P_{k + n - d + 2}^- \Lambda^{d - 2}(\Omega_{i, h}) \right)^n.
	\end{align}
\end{subequations}
For $n = 3$, $W_{i, h}$ is thus given by three rows of (first kind) edge-based Nedelec element ($N1_{k + 2}^e$) for $i \in I^3$ and by three instances of Lagrange elements ($P_{k + n}$) for $i \in I^2$. 
The lowest order choice in this family, i.e. with $k= 0$, is presented in Table~\ref{tab: W_h}.

A reduced family of finite elements arises by noting that all stability conditions remain valid after the polynomial order of the trace onto $\Gamma$ is reduced by one. Table~\ref{tab: W_h reduced} presents the lowest order member of this family.

\begin{table}[th]
	\caption{The finite element spaces chosen for $n = 2$ and $n = 3$ of lowest order within the family \eqref{eq: Finite Elements}. The negative orders in the subscript denote discontinuous Lagrange elements. On the zero-dimensional manifolds, the polynomial order is redundant since any finite element space corresponds to point evaluation there.}
	\label{tab: W_h}
	\centering
	\begin{tabular}{l|rrrr}
	\hline
	  $d$  & $W_h$         & $\Sigma_h$  & $\bm{U}_h$   & $R_h$ \\ 
	\hline
		 2 & $(P_2)^2$     & $(BDM_1)^2$ & $(P_0)^2$    & \rule{0pt}{2.6ex}$P_0$ \\
		 1 &               & $(P_2)^2$   & $(P_{-1})^2$ &       \\                
		 0 &               &             & $(P_{-2})^2$ &       \\                
	\hline
	\end{tabular}
	\\[5pt]
	\begin{tabular}{l|rrrr}
	\hline
	   $d$ & $W_h$        & $\Sigma_h$   & $\bm{U}_h$   & $R_h$     \\ 
	\hline 
		 3 & $(N1_2^e)^3$ & $(N2_1^f)^3$ & $(P_0)^3$    & \rule{0pt}{2.6ex}$(P_0)^3$ \\
		 2 & $(P_3)^3$    & $(BDM_2)^3$  & $(P_{-1})^3$ & $P_{-1}$  \\ 
		 1 &              & $(P_3)^3$    & $(P_{-2})^3$ &           \\
		 0 &              &              & $(P_{-3})^3$ &           \\
	\hline
	\end{tabular}
\end{table}

\begin{table}[th]
	\caption{The lowest-order finite element spaces of the reduced family for $n = 2$ and $n = 3$. The superscript minus indicates that the trace of the finite element space onto $\Gamma_h$ is reduced by one order.}
	\label{tab: W_h reduced}
	\centering
	\begin{tabular}{l|rrrr}
	\hline
	  $d$  & $W_h$ & $\Sigma_h$  & $\bm{U}_h$   & $R_h$ \\
	\hline
		 2 & $(P_2^-)^2$ & $(BDM_1^-)^2$ & $(P_0)^2$    & \rule{0pt}{2.6ex}$P_0$ \\
		 1 & & $(P_1)^2$   & $(P_0)^2$ &       \\                
		 0 & &             & $(P_0)^2$ &       \\                
	\hline
	\end{tabular}
	\\[5pt]
	\begin{tabular}{l|rrrr}
	\hline
	   $d$ & $W_h$ & $\Sigma_h$   & $\bm{U}_h$   & $R_h$     \\
	\hline 
		 3 & $(N1_2^{e-})^3$	& $(N2_1^-)^3$ & $(P_0)^3$    & \rule{0pt}{2.6ex}$(P_0)^3$\\
		 2 & $(P_2^-)^3$ 	& $(BDM_1^-)^3$  & $(P_0)^3$    & $P_0$  \\ 
		 1 & 				& $(P_1)^3$    & $(P_0)^3$ &           \\
		 0 & 				&            & $(P_0)^3$ &           \\
	\hline
	\end{tabular}
\end{table}

Properties \ref{S: conformity}--\ref{S: W_h} can be verified with the use of the presented tables. Finally, these families correspond to the stable triplets analyzed in \cite{AFW_FEEC} (Sections 11.6--11.7), hence \ref{S: stable triplet} holds as well.

\subsection{Discrete Problem} \label{sub: Discrete Problem}

Since the finite elements described above are contained in the continuous spaces from Section~\ref{sec: Well-posedness} by \ref{S: conformity}, the discrete formulation of the model problem is a direct translation of \eqref{weakform}: \\
Find $(\sigma_h, \bm{u}_h, r_h) \in \Sigma_h \times \bm{U}_h \times R_h$ such that
\begin{subequations} \label{weakform_h}
	\begin{align}
		(\mathfrak{A} \sigma_h, \tau_h)_{\Omega \times \Gamma}
		+ \inpo{\bm{u}_h}{\mathfrak{D} \cdot \epsilon \tau_h}
		+ \inpo{r_h}{\skw \epsilon \tau_h}
		&= (\bm{g}_u, \bm{n} \cdot \epsilon \tau_h)_{\partial_u \Omega}, 
		\\
		\inpo{\mathfrak{D} \cdot \epsilon \sigma_h}{\bm{v}_h} &= \inpo{\epsilon^2 \bm{f}}{\bm{v}_h}, 
		\\
		\inpo{\skw \epsilon \sigma_h}{s_h} &= 0.
	\end{align}
\end{subequations}
for all $(\tau_h, \bm{v}_h, s_h ) \in \Sigma_h \times \bm{U}_h \times R_h$.
We note that the saddle-point structure of this problem has not changed, and can readily be uncovered using the bilinear forms from \eqref{operators}. 

\subsection{Stability} \label{subsec: well-posedness_h}

We continue with the analysis concerning the well-posedness of \eqref{weakform_h}. Let us recall the norms from \eqref{norms} for convenience
\begin{subequations} \label{norms_h}
	\begin{align}
		\| \tau \|_\Sigma^2 &= \normL{\tau}^2 + \normLg{\bm{n} \cdot \tau}^2 + \normL{ \hat{\epsilon}_{\max}^{-1} \mathfrak{D} \cdot \epsilon \tau}^2, \\
		\| \bm{v} \|_{U}^2 &= \normL{\hat{\epsilon}_{\max} \bm{v}}^2, \\
		\| s \|_{R}^2 &= \normL{\epsilon s}^2.
	\end{align}
\end{subequations}

\begin{theorem}[Ellipticity] \label{thm: ellip_h}
	If the discrete spaces satisfy \ref{S: Poisson} and $\sigma_h \in \Sigma_h$ satisfies
	\begin{align*}
		b(\sigma_h; \bm{v}_h, s_h) &= 0, & \text{for all } ( \bm{v}_h, s_h) \in \bm{U}_h \times R_h,
	\end{align*}
	then the following ellipticity bound holds
	\begin{align*}
		a(\sigma_h; \sigma_h) \gtrsim \norm{\sigma_h}_\Sigma^2.
	\end{align*}
\end{theorem}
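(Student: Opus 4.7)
The plan is to mimic, essentially verbatim, the proof of Theorem~\ref{Thm: ellipticity} at the discrete level. The continuous argument hinged on choosing a test function $\bm{v}$ proportional to $\mathfrak{D}\cdot\epsilon\sigma$; here the same trick works, provided we can verify that this choice actually lands in the discrete space $\bm{U}_h$. That inclusion is precisely what assumption \ref{S: Poisson} provides.

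First, I would set $s_h = 0$ and focus on the constraint coming from varying $\bm{v}_h$. Expanding the mixed-dimensional divergence on each subdomain,
\begin{align*}
  (\mathfrak{D}\cdot\epsilon\sigma_h)|_{\Omega_i}
  = \epsilon_i\,\nabla\cdot\sigma_{i,h}
  - \sum_{j\in\hat{J}_i} \epsilon_\hatj\,(\bm{n}\cdot\sigma_{\hatj,h})|_{\Gamma_j}.
\end{align*}
Since $\epsilon$ and $\hat{\epsilon}_{\max}$ are constant on each manifold, the two parts of \ref{S: Poisson} give $\nabla\cdot\sigma_{i,h}\in\bm{U}_{i,h}$ and $(\bm{n}\cdot\sigma_{\hatj,h})|_{\Gamma_j}\in\bm{U}_{i,h}$, so the candidate
\begin{align*}
  \bm{v}_h := \hat{\epsilon}_{\max}^{-2}\,\mathfrak{D}\cdot\epsilon\sigma_h
\end{align*}
belongs to $\bm{U}_h$.

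Second, I would insert $(\bm{v}_h, s_h) = (\hat{\epsilon}_{\max}^{-2}\mathfrak{D}\cdot\epsilon\sigma_h,\, 0)$ into the hypothesis $b(\sigma_h;\bm{v}_h,s_h)=0$. Exactly as in the proof of Theorem~\ref{Thm: ellipticity}, this collapses to
\begin{align*}
  \normL{\hat{\epsilon}_{\max}^{-1}\,\mathfrak{D}\cdot\epsilon\sigma_h}^2 = 0,
\end{align*}
so the divergence contribution to $\|\sigma_h\|_\Sigma$ vanishes.

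Finally, I would combine this with the coercivity estimate \eqref{Bounds A} for $\mathfrak{A}$, which gives $a(\sigma_h;\sigma_h) \gtrsim \norm{\sigma_h}_{\Omega\times\Gamma}^2 = \normL{\sigma_h}^2 + \normLg{\bm{n}\cdot\sigma_h}^2$. Adding the (now vanishing) divergence term on the right reconstructs the full $\Sigma$-norm and yields $a(\sigma_h;\sigma_h)\gtrsim\|\sigma_h\|_\Sigma^2$. There is really no obstacle of substance: the only nontrivial bookkeeping is checking that \ref{S: Poisson} survives the piecewise-constant scalings by $\epsilon$ and $\hat{\epsilon}_{\max}$, which it does because these weights are constant on each $\Omega_i$ and hence preserve the local finite element spaces.
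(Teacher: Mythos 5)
Your proposal is correct and follows exactly the paper's route: the paper's proof simply notes that \ref{S: Poisson} gives $\mathfrak{D}\cdot\Sigma_h\subseteq\bm{U}_h$ and then repeats the argument of Theorem~\ref{Thm: ellipticity}, which is precisely your choice of test function $\bm{v}_h=\hat{\epsilon}_{\max}^{-2}\,\mathfrak{D}\cdot\epsilon\sigma_h$ followed by the coercivity of $\mathfrak{A}$. You merely spell out the bookkeeping (piecewise-constant weights preserving the local spaces) that the paper leaves implicit.
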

\begin{proof}
	By \ref{S: Poisson}, we have $\mathfrak{D} \cdot \Sigma_h \subseteq \bm{U}_h$ and the same arguments are used as in Theorem~\ref{Thm: ellipticity}.
\qed
\end{proof}

The next step is to consider the inf-sup condition for the bilinear form $b$ in the discrete case. 

\begin{theorem}[Inf-Sup]\label{Thm: inf-sup_h}
	If the discrete spaces satisfy conditions \ref{S: conformity}-\ref{S: stable triplet}, then for all $(\bm{u}_h, r_h) \in \bm{U}_h \times R_h$, 
	\begin{align*}
		\sup_{\tau_h \in \Sigma_h} \frac{b(\tau_h; \bm{u}_h, r_h)}{\norm{\tau_h}} \gtrsim \norm{\bm{u}_h}_U + \norm{r_h}_R,
	\end{align*}
\end{theorem}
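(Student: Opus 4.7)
The plan is to mirror the continuous inf-sup argument (Theorem~\ref{Thm: inf-sup}) at the discrete level. For given $(\bm{u}_h, r_h) \in \bm{U}_h \times R_h$ I construct a test function $\tau_h = \eta_h + \xi_h \in \Sigma_h$ in which $\eta_h$ is responsible for the mixed-dimensional divergence and $\xi_h$ for the skew part, satisfying
\begin{align*}
b(\tau_h; \bm{u}_h, r_h) \gtrsim \norm{\bm{u}_h}_U^2 + \norm{r_h}_R^2, \qquad \norm{\tau_h}_\Sigma \lesssim \norm{\bm{u}_h}_U + \norm{r_h}_R.
\end{align*}
Conformity \ref{S: conformity} places the constructed objects in $\Sigma$ so that the identities proved in Section~\ref{sec: Well-posedness} apply verbatim.

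For $\eta_h$, I would replay the hierarchical construction of Lemma~\ref{lem: Poisson} inside the discrete spaces, ascending from dimension $d = 0$ to $d = n$. At each step interface values $\bm{\phi}_{j,h}$ are prescribed on $\Gamma_{j,h}$ and then extended into the adjacent $\Omega_{i,h}$. Property \ref{S: Poisson} is exactly what is needed to carry every step through discretely: the identity $(\bm{n} \cdot \Sigma_{\hatj, h})|_{\Gamma_j} = \bm{U}_{i, h}$ lets me realize an arbitrary element of $\bm{U}_{i,h}$ as a discrete normal trace, while $\nabla \cdot \Sigma_{i, h} \subseteq \bm{U}_{i, h}$ supplies bounded discrete $H(\div)$-extensions on the intermediate manifolds and a stable discrete mixed Poisson solve on the top-dimensional ones. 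The outcome is $\eta_h \in \Sigma_h$ with $\mathfrak{D} \cdot \epsilon \eta_h = \hat{\epsilon}_{\max}^2 \bm{u}_h$ and $\norm{\eta_h}_\Sigma \lesssim \norm{\bm{u}_h}_U$.

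For $\xi_h$, I would use the auxiliary space $W_h$ in the spirit of Lemma~\ref{lem: Stokes}. Let $\Pi_{R_h}$ denote the $L^2(\Omega)$-projection onto $R_h$ and set $g_h := \epsilon^2 r_h - \Pi_{R_h}(\skw \epsilon \eta_h) \in R_h$. Reproducing the three-step construction in Lemma~\ref{lem: Stokes} by solving the corresponding discrete Stokes problems on each $\Omega_i$ with $i \in I^2 \cup I^3$---well-posed by the Stokes stability of $(\Xi W_{i,h}, R_{i,h})$ from \ref{S: W_h}(b)---yields $w_h \in W_h$ with $\widetilde{\mathfrak{D}} \cdot \Xi w_h = g_h$ and the expected norm control. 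Setting $\xi_h := \epsilon^{-1} \mathfrak{D} \times w_h$, property \ref{S: W_h}(a) gives $\xi_h \in \Sigma_h$ and Lemma~\ref{lem: commutativity} gives $\mathfrak{D} \cdot \epsilon \xi_h = 0$ together with $\skw \epsilon \xi_h = g_h$. Since $r_h \in R_h$ implies $\inpo{\skw \epsilon \eta_h - \Pi_{R_h}(\skw \epsilon \eta_h)}{r_h} = 0$, the choice $\tau_h = \eta_h + \xi_h$ then produces $b(\tau_h; \bm{u}_h, r_h) = \norm{\bm{u}_h}_U^2 + \norm{r_h}_R^2$ exactly, and the $\Sigma$-norm bound on $\tau_h$ follows by chaining the two construction estimates with $\norm{\epsilon^{-1} g_h}_\Omega \lesssim \norm{r_h}_R + \norm{\eta_h}_\Sigma$.

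The main obstacle I anticipate is a faithful discrete reproduction of Lemma~\ref{lem: Stokes}. The continuous proof relied on $H^1$-regularity of Stokes problems whose Dirichlet data on $\Omega^3$ was inherited from the lower-dimensional Stokes solve on $\Omega^2$ via the interface function $\phi$; replacing these by discrete Stokes solves on the nested pair $(\Xi W_{i,h}, R_{i,h})$ requires tracking how $\phi_h$ enters simultaneously as a source on $\Omega^2$ and as Dirichlet data on $\Omega^3$, with constants that remain uniform in $\epsilon$, $\gamma$, and $h$. Condition \ref{S: W_h} is designed precisely to supply those local discrete Stokes stabilities and \ref{S: stable triplet} certifies the top-dimensional building blocks; the technical work lies in chaining these local estimates into a geometry-independent global bound.
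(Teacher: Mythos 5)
Your overall architecture ($\tau_h = \eta_h + \xi_h$, with $\eta_h$ controlling the mixed-dimensional divergence via a discrete version of Lemma~\ref{lem: Poisson} and $\xi_h$ built from $W_h$ through the commuting diagram) matches the paper, and the $\eta_h$ part is essentially the paper's Lemma~\ref{lem: Poisson h}. The gap is in the construction of $\xi_h$ on the top-dimensional manifolds. You propose to reproduce the three-step construction of Lemma~\ref{lem: Stokes} by solving discrete Stokes problems on each $\Omega_i$ with $i \in I^2 \cup I^3$, justified by the Stokes stability of $(\Xi W_{i,h}) \times R_{i,h}$ from \ref{S: W_h}. But that condition is stated only for $i \in I^2$, and for $i \in I^3$ it is not available: in the exemplary family, $W_{i,h}$ is a Nedelec space, which is $H(\curl)$- but not $H^1$-conforming, so $(\Xi W_{i,h}) \times R_{i,h}$ cannot serve as a stable Stokes pair there. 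This is precisely where the paper's Lemma~\ref{lem: Stokes h} departs from the continuous proof: on $\Omega^3$ it takes $w_i$ to be \emph{any} bounded $H(\curl, \Omega_i)$-extension of the interface function $\phi$ (no Stokes solve, hence no control of $r_i$ there), and then restores control of $r_i$ for $i \in I^3$ by adding a correction $\psi_i \in \Sigma_{i,h}$ with $\nabla \cdot \psi_i = 0$, vanishing normal trace on $\Gamma$, and $\Pi_{R_{i,h}} \skw_3 \psi_i = r_i - \Pi_{R_{i,h}} \skw_3 (\nabla \times w_i)$, supplied by the inf-sup stability of the elasticity triplet in \ref{S: stable triplet}. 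You cite \ref{S: stable triplet} only as ``certifying the top-dimensional building blocks'' but never actually invoke it in the construction; without the correction $\psi_i$ your argument does not control $r_h$ on $\Omega^3$.

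A smaller imprecision: the discrete Stokes solves enforce the divergence constraint only weakly, so the construction delivers $\Pi_{R_h} \skw \epsilon \xi_h = g_h$ rather than $\skw \epsilon \xi_h = g_h$. Lemma~\ref{lem: commutativity} transfers $\widetilde{\mathfrak{D}} \cdot \Xi w_h$ to $\skw \mathfrak{D} \times w_h$ exactly, but $\widetilde{\mathfrak{D}} \cdot \Xi w_h$ itself equals the target only after projection onto $R_h$. This is harmless for the final identity, since $b$ pairs $\skw \epsilon \tau_h$ against $r_h \in R_h$, but the exact equality you assert is not what the discrete construction provides.
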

\begin{proof}
	With $\bm{u}_h \in \bm{U}_h$ and $r_h \in R_h$ given, we follow a similar strategy as in the proof of Theorem~\ref{Thm: inf-sup}. Here, we rely on Lemmas \ref{lem: Poisson h} and \ref{lem: Stokes h} to provide $\eta_h \in \Sigma_h$ to gain control of $\bm{u}_h$ and a divergence-free function $\xi_h \in \Sigma_h$ controlling $r_h \in R_h$.

	In short, we choose $\eta_h, \xi_h \in \Sigma_h$ such that
	\begin{align*}
		\mathfrak{D} \cdot \epsilon \eta_h &= \hat{\epsilon}_{\max}^2 \bm{u}_h, \\
		\mathfrak{D} \cdot \epsilon \xi_h &= 0, &
		\Pi_{R_h} \skw \xi_h &= \epsilon r_h - \Pi_{R_h} \skw \eta_h
	\end{align*}
	and satisfy the bound
	\begin{align*}
		\| \eta_h \|_{\Sigma} + \| \xi_h \|_{\Sigma} &\lesssim 
		\| \bm{u}_h \|_U + \| r_h \|_R
	\end{align*}
	
	Following the same steps as in Theorem~\ref{Thm: inf-sup}, we define $\tau_h = \eta_h + \xi_h$ so that
	\begin{align*}
		b(\tau_h; \bm{u}_h, r_h) 
		&= \inpo{\mathfrak{D} \cdot \epsilon \tau_h}{\bm{u}_h}
		+ \inpo{\skw \epsilon\tau_h}{r_h} \nonumber \\
		&= \inpo{\mathfrak{D} \cdot \epsilon \eta_h}{\bm{u}_h} + 
		\inpo{\skw \eta_h + \skw \xi_h}{\epsilon r_h} \nonumber \\
		&= \| \bm{u}_h \|_{U}^2  + \norm{ r_h }_R^2 \\
		\| \tau_h \|_\Sigma
		&\lesssim \| \eta_h \|_\Sigma + \| \xi_h \|_\Sigma
		\lesssim \| \bm{u}_h \|_{U}  + \norm{ r_h }_R.
	\end{align*}
	The proof is concluded by combining the above.
\qed
\end{proof}

\begin{lemma} \label{lem: Poisson h}
	For each $\bm{u}_h \in \bm{U}_h$, a function $\eta_h \in \Sigma_h$ exists such that
	\begin{align}
			\mathfrak{D} \cdot \epsilon \eta_h &= \hat{\epsilon}_{\max}^2 \bm{u}_h, &
			\| \eta_h \|_{\Sigma} &\lesssim \| \bm{u}_h \|_U. \label{eq Poisson h}
	\end{align}
\end{lemma}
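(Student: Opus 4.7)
The plan is to mirror the hierarchical construction of Lemma~\ref{lem: Poisson} while remaining inside the finite element spaces, replacing the continuous $H(\div)$-extension and elliptic regularity arguments by their discrete analogs supplied by conditions \ref{S: conformity} and \ref{S: Poisson}. The key structural observation is that \ref{S: Poisson} provides, on each $\Omega_i$ separately, both $\nabla \cdot \Sigma_{i,h} \subseteq \bm{U}_{i,h}$ and the surjectivity of the normal trace onto $\bm{U}_{i,h}$; together these give a discrete mixed Poisson inf-sup on each subdomain with constants independent of $\epsilon$ and $h$, which is exactly what is needed to imitate each step of the continuous argument.

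First I would handle $i \in I^0$: since $\bm{u}_{i,h} \in \bm{U}_{i,h}$ and $\bm{U}_{i,h} = (\bm{n} \cdot \Sigma_{\hatj,h})|_{\Gamma_j}$ by \ref{S: Poisson}, I pick a single $j \in \hat{J}_i$ achieving $(\hat{\epsilon})|_{\Gamma_j} = (\hat{\epsilon}_{\max})|_{\Omega_i}$ and set $\bm{\phi}_{j,h} := -\hat{\epsilon}_{\max} \bm{u}_{i,h}$ directly in the admissible discrete trace space, and zero elsewhere. This satisfies the discrete analogs of \eqref{constr psi^0} and \eqref{bound psi^0} verbatim.

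Next, for $i \in I^d$ with $1 \le d < n$, I would construct $\eta_{i,h} \in \Sigma_{i,h}$ by solving a discrete mixed Poisson problem on $\Omega_i$ with the inherited data $\bm{\phi}_{j,h}$, $j \in \check{J}_i$, imposed as an essential condition on the normal trace, and with homogeneous normal trace on the remaining portion of $\partial \Omega_i$. Discrete stability provides $\eta_{i,h}$ with divergence in $\bm{U}_{i,h}$ and a bound $\|\eta_{i,h}\|_{\Omega_i} + \|\nabla \cdot \eta_{i,h}\|_{\Omega_i} \lesssim \sum_{j \in \check{J}_i} \|\bm{\phi}_{j,h}\|_{\Gamma_j}$ uniformly in $\epsilon$, which is the discrete counterpart of \eqref{bound eta^d}. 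I would then assign the next layer of interface data $\bm{\phi}_{j,h}$ for $j \in \hat{J}_i$ exactly as in the continuous proof, concentrating $-\hat{\epsilon}_{\max} \bm{u}_{i,h} + \hat{\epsilon}_{\max}^{-1} \nabla \cdot \epsilon \eta_{i,h}$ on a single maximal-$\hat{\epsilon}$ interface. By \ref{S: Poisson} this value lies in the admissible discrete trace space, so the discrete analogs of \eqref{constr psi^1} and \eqref{bound psi^1} transfer, using $\epsilon \lesssim \hat{\epsilon}_{\max}$.

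Finally, for $i \in I^n$, I would solve the discrete version of the mixed Poisson problem \eqref{aux problem eta_i}, using the just-constructed $\bm{\phi}_{j,h}$ on $\Gamma_j$, $j \in \check{J}_i$, as essential data, $\bm{u}_{i,h}$ as the right-hand side in the divergence equation, and homogeneous data on $\partial_\sigma \Omega_i$. Since $\epsilon = \hat{\epsilon}_{\max} = 1$ on $\Omega^n$, discrete mixed Poisson stability yields the analog of \eqref{bound eta_i}. Concatenating these pieces, $\mathfrak{D} \cdot \epsilon \eta_h = \hat{\epsilon}_{\max}^2 \bm{u}_h$ and the final estimate $\|\eta_h\|_\Sigma \lesssim \|\bm{u}_h\|_U$ follow by the same chain used in \eqref{constr eta}--\eqref{bound eta psi}. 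The principal obstacle, and the reason this lemma requires a separate proof, is that the continuous right-inverse from \cite{quarteroni1999domain} cannot be invoked; instead one must build an $\epsilon$-uniform, $h$-stable extension that lives in the conforming finite element space, and this is precisely what \ref{S: Poisson} is tailored to supply.
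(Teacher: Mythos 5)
Your proposal follows essentially the same route as the paper's proof: a hierarchical, dimension-by-dimension construction mirroring Lemma~\ref{lem: Poisson}, where \ref{S: Poisson} guarantees the interface data $\bm{\phi}_{j,h}$ lies in the discrete normal-trace space, a bounded discrete $H(\div)$-extension (the paper cites the discrete version in \cite{quarteroni1999domain}, Section 4.1.2, rather than deriving it from an inf-sup condition) handles the intermediate dimensions, and a discrete analogue of \eqref{aux problem eta_i} completes the top-dimensional part, with the estimates chained exactly as in \eqref{bound psi^1}--\eqref{bound eta psi}. The construction and the role of each hypothesis match the paper's argument.
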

\begin{proof}
	We use the same steps as in Lemma~\ref{lem: Poisson} to hierarchically construct $\eta_h \in \Sigma_h$. A concise exposition follows, starting with $d = 0$. For each $i \in I^d$, we use \ref{S: Poisson} to first construct a discrete $\bm{\phi}_{j, h}$ in the trace space $(\bm{n} \cdot \Sigma_h)|_{\Gamma_j}$ for all $j \in \hat{J}_i$ such that
	\begin{subequations}
	\begin{align}
		-\jump{\hat{\epsilon} \bm{\phi}_h}_i 
		&= \hat{\epsilon}_{\max}^2 \bm{u}_{i, h} 
		- \nabla \cdot \epsilon \eta_{i, h}, \\
		\norm{\bm{\phi}_h}_{\Gamma^d} 
		&\lesssim \norm{\hat{\epsilon}_{\max} \bm{u}_h}_{\Omega^d} 
		+ \norm{\bm{\phi}_h}_{\Gamma^{d - 1}} 
	\end{align}
	\end{subequations}
	in which $\nabla \cdot \epsilon \eta_{i, h}$ and $\bm{\phi}_{j, h}$ are understood as zero for $i \in I^0$, $j \in \check{J}_i$. For $i \in I^{d + 1}$, the function $\eta_{i, h}$ is then defined as the bounded, discrete $H(\div, \Omega_{i, h})$-extension from \cite{quarteroni1999domain} (Section 4.1.2). These steps are repeated by incrementing $d$ until $\bm{\phi}_h$ is completely defined on $\Gamma_h$. Finally, we solve a discrete Poisson problem for each $i \in I^3$, in analogy with \eqref{aux problem eta_i}, to complete $\eta_h \in \Sigma_h$. 
	It follows by the same arguments as in Theorem~\ref{Thm: inf-sup} that the constructed $\eta_h$ satisfies \eqref{eq Poisson h}.
\qed
\end{proof}

\begin{lemma} \label{lem: Stokes h}
	Given $r_h \in R_h$, a function $\xi_h \in \Sigma_h$ exists such that
	\begin{align}
			\mathfrak{D} \cdot \epsilon \xi_h &= 0, &
			\Pi_{R_h} \skw \xi_h &= \epsilon r_h, &
			\| \xi_h \|_{\Sigma} &\lesssim \| r_h \|_R. \label{eq Stokes h}
	\end{align}
	with $\Pi_{R_h}$ the $L^2$ projection onto $R_h$.
\end{lemma}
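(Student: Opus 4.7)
The strategy mirrors the continuous Lemma~\ref{lem: Stokes} at the discrete level: construct $w_h \in W_h$ through a hierarchy of discrete Stokes problems, then set $\xi_h = \epsilon^{-1} \mathfrak{D}\times w_h$. By \ref{S: W_h}(a), $\xi_h$ lies in $\Sigma_h$, and by Lemma~\ref{lem: commutativity} we immediately get $\mathfrak{D}\cdot \epsilon \xi_h = \mathfrak{D}\cdot\mathfrak{D}\times w_h = 0$ together with $\skw \xi_h = \epsilon^{-1}\widetilde{\mathfrak{D}}\cdot \Xi w_h$. The non-trivial task therefore reduces to arranging $\Pi_{R_h} \widetilde{\mathfrak{D}}\cdot \Xi w_h = \epsilon^2 r_h$ with the right bound. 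The need for the projection $\Pi_{R_h}$ in the statement arises precisely because $\widetilde{\mathfrak{D}}\cdot \Xi W_h$ does not coincide with $R_h$ pointwise but merely surjects onto it modulo $\Pi_{R_h}$, as is typical in discrete Stokes analyses.

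First, I would reproduce Step~1 of the continuous proof discretely: pick a scalar $\phi_h$ in a piecewise-polynomial subspace of $\prod_{i \in I^2}\prod_{j \in \hat{J}_i} H_0^1(\Gamma_j)$ compatible with the traces of $W_h$, satisfying the mean-value constraint $-\Pi_{\mathbb{R}_i}\jump{\phi_h}_i = \Pi_{\mathbb{R}_i}\epsilon^2 r_{i,h}$ for each $i \in I^2$, with the bound $\|\phi_h\|_{H^1(\Gamma^2)}\lesssim \|\epsilon^2 r_h\|_{\Omega^2}$. This is a finite-dimensional linear-algebra problem admitting a bounded right inverse.

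Second, for each $i \in I^2$, I would invoke the discrete Stokes stability of the pair $(\Xi W_{i,h}, R_{i,h})$ from \ref{S: W_h}(b) to produce $w_{i,h}\in W_{i,h}$ with vanishing trace on $\partial\Omega_i$ and
\begin{align*}
\Pi_{R_{i,h}}\nabla\cdot \Xi w_{i,h} &= (I-\Pi_{\mathbb{R}_i})(\epsilon^2 r_{i,h} + \jump{\phi_h}_i),
\end{align*}
together with the bound $\|\Xi w_{i,h}\|_{H^1(\Omega_i)} \lesssim \|\epsilon^2 r_{i,h}\|_{\Omega_i} + \|\phi_h\|_{\Gamma^2}$. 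For each $i \in I^3$ I would proceed analogously, using the stable discrete Stokes pair implicit in \ref{S: stable triplet} to solve a problem with a $\phi_h$-dependent boundary datum along $\Gamma_j$, $j\in \check{J}_i$, and with the divergence constraint $\Pi_{R_{i,h}}\nabla\cdot \Xi w_{i,h} = r_{i,h}$; the projection again appears because only surjectivity onto $R_{i,h}$ is guaranteed.

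Assembling the local pieces into $w_h \in W_h$ and summing the contributions of $\nabla\cdot \Xi w_{i,h}$ and the jumps $\check{\bm{n}}_i \cdot \jump{\bm{n}\cdot\Xi_3 \hat{w}_h}_i$, the same telescoping of mean values as in the continuous proof yields $\Pi_{R_h}\widetilde{\mathfrak{D}}\cdot \Xi w_h = \epsilon^2 r_h$ and hence $\Pi_{R_h}\skw \xi_h = \epsilon r_h$. Summing the local stability bounds controls $\|\xi_h\|_\Sigma$ by $\|r_h\|_R$ exactly as in \eqref{eq: stokes2}, using $\epsilon \lesssim 1$ and \eqref{eps bounded by emax}. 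The main obstacle I anticipate is verifying that the discrete Stokes stability on each three-dimensional subdomain holds with an $\epsilon$- and $h$-independent constant when non-homogeneous boundary data $\phi_h \check{\bm{n}}_i\bm{n}_j^T$ is imposed on parts of $\partial\Omega_i$: this requires a bounded discrete lifting of $\phi_h$ that is simultaneously compatible with the trace space of $W_h$ and with the Stokes inf-sup pairing, which is the discrete counterpart of the elliptic regularity argument used in the continuous Step~3.
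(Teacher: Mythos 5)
Your overall skeleton (interface datum on $\Gamma^2$, discrete Stokes on the two-dimensional subdomains via \ref{S: W_h}, then $\xi_h$ built from $\epsilon^{-1}\mathfrak{D}\times w_h$ and Lemma~\ref{lem: commutativity}) matches the paper through Step 2, and you correctly identify why the projection $\Pi_{R_h}$ must appear. The gap is in your treatment of the three-dimensional subdomains. You propose to solve, for each $i \in I^3$, a discrete Stokes problem in $\Xi W_{i,h}$ with the constraint $\Pi_{R_{i,h}} \nabla\cdot \Xi w_{i,h} = r_{i,h}$, citing ``the stable discrete Stokes pair implicit in \ref{S: stable triplet}''. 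No such pair is available: condition \ref{S: W_h} asserts Stokes stability of $(\Xi W_{i,h}) \times R_{i,h}$ only for $i \in I^2$, and \ref{S: stable triplet} is a statement about $\Sigma_{i,h}\times\bm{U}_{i,h}\times R_{i,h}$, not about $W_{i,h}$ or $\Xi W_{i,h}$. For the exemplary elements ($W_{i,h} = (N1^e_{k+2})^3$ in $\Omega^3$) there is no claim, and no reason to expect, that $\Xi W_{i,h}\times R_{i,h}$ is inf-sup stable for Stokes with inhomogeneous boundary data; you flag exactly this as your ``main obstacle'' but leave it unresolved, and it is not a technicality --- it is the point where the continuous argument (which leans on elliptic regularity of the Stokes problem \eqref{eq: Stokes}) fails to discretize directly.

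The paper avoids the obstacle by a different device. In $\Omega^3$ it takes $w_i$ to be \emph{any} bounded discrete $H(\curl,\Omega_i)$-extension of $\phi$ --- no divergence or asymmetry constraint at all --- and then repairs the asymmetry with a separate divergence-free correction $\psi_i \in \Sigma_{i,h}$ satisfying $\nabla\cdot\psi_i = 0$, $(\bm{n}\cdot\psi_i)|_{\Gamma_j}=0$, and $\Pi_{R_{i,h}}\skw_3\psi_i = r_i - \Pi_{R_{i,h}}\skw_3(\nabla\times w_i)$, which is exactly what the elasticity inf-sup in \ref{S: stable triplet} delivers; the final function is $\xi = \psi + \epsilon^{-1}\mathfrak{D}\times w$, not $\epsilon^{-1}\mathfrak{D}\times w$ alone. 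Your construction omits this additive correction, so even granting your Step 3 you would only control $\Pi_{R_h}\skw\xi_h$ on $\Omega^2$. A secondary discrepancy: the paper's discrete interface datum $\phi$ is vector-valued, taken in the trace space $(\bm{n}_i\times W_{i,h})|_{\Gamma_j} \subset (H(\div,\Gamma_j))^3$ with the constraint imposed on $\Pi_{\mathbb{R}_i}\skw_2\jump{\varphi}_i$; your scalar $H^1_0$ datum would still have to be lifted into that trace space as $\phi_h\check{\bm{n}}_i\bm{n}_j^T$, which is not automatic for the Nedelec traces and is precisely the compatibility issue the paper's formulation is designed to sidestep.
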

\begin{proof}
	In this proof, we make extensive use of the discrete space $W_h$ from stability requirement \ref{S: W_h}. In particular, we will first introduce $w_h \in W_h$ such that $\mathfrak{D} \times w_h$ controls $r_{i, h}$ for $i \in I^2$. Then, a correction is introduced using \ref{S: stable triplet} in order to control $r_{i, h}$ for $i \in I^3$ as well. For brevity, we omit the subscript $h$ on all variables within this proof.
	
	As in Lemma~\ref{lem: Stokes}, we start with an interface function $\phi$ defined on the trace mesh $\Gamma_h^2$ which serves first as a source function and second as a boundary condition. We proceed according to the following four steps.
	\begin{enumerate}
		\item We consider functions on $\Gamma_h^2$ in the trace space of $(W_h)|_{\Omega^3}$ that vanish at all intersections and extremities. Let us therefore introduce the function space $\Phi_h$ as
		\begin{align}
		 	\Phi_h = \prod_{i \in I^3} \prod_{j \in \check{J}_i} \{ \phi \in (\bm{n}_i \times W_{i, h})|_{\Gamma_j}: \ (\bm{n}_j \cdot \phi)|_{\partial \Gamma_j} = 0 \}.
		\end{align}
		It is important to note that the two instances of $\bm{n}$ in this definition are different. In particular, the former is defined as normal to $\partial \Omega_i$, of which $\Gamma_j$ is a subset, whereas the latter is normal with respect to the boundary $\partial \Gamma_j$. 

		We note that, since $\mathfrak{D} \times W_h \subseteq \Sigma_h$ by \ref{S: W_h}, we have $W_{i, h} \subseteq (H(\curl, \Omega_i))^3$ for $i \in I^3$. In turn, it follows in the discrete setting that $\Phi_{j, h} \in (H(\div, \Gamma_j))^3$ for all $j \in \check{J}_i$, with the divergence tangential to $\Gamma_j$. Using this observation, we let $\phi$ solve the following minimization problem
			\begin{align}
				\min_{\varphi \in \Phi_h} &\ \tfrac{1}{2} \norm{\varphi}_{H(\div, \Gamma^2)}^2 &
			 	& \text{subject to } \ 
				\Pi_{\mathbb{R}_i} (\skw_2 \jump{\varphi}_i + \epsilon^2 r_i) = 0,
				\ \forall i \in I^2.
			\end{align}
		In other words, a finite-dimensional problem is solved for each $i \in I^2$ to obtain a bounded distribution that has an average asymmetry corresponding to $\epsilon^2 r_i$. In particular, we obtain the following two properties
		\begin{subequations}
			\begin{align}
				- \Pi_{\mathbb{R}_i} \skw_2 \jump{\phi}_i &= \Pi_{\mathbb{R}_i} \epsilon^2 r_i, 
				& \forall i &\in I^2, \label{constraints V h} \\
				\| \phi \|_{H(\div, \Gamma^2)} &\lesssim \| \epsilon^2 r \|_{\Omega^2}. \label{Stokes bound theta h}
			\end{align}
		\end{subequations}

		\item We generate $w_i$ for $i \in I^2$ using the distribution $\phi$ from the first step as a source term. Introducing $\Theta_{i, h} = \Xi W_{i, h}$, it follows from \ref{S: W_h} that $\Theta_{i, h} \times R_{i, h}$ is a stable pair for the discretization of the Stokes problem \eqref{eq: Stokes n-1}. Hence, we can find $\theta_i \in \Theta_{i, h}$ such that
		\begin{align} \label{eq: Stokes n-1 h}
				\Pi_{R_{i, h}} \nabla \cdot \theta_i &= (\Pi_{R_{i, h}} - \Pi_{\mathbb{R}_i})(\epsilon^2 r_i + \skw_2 \jump{\phi}), \\
				(\theta_i)|_{\partial \Omega_i} &= 0, \\
			\norm{\theta}_{H^1(\Omega^2)} &\lesssim \norm{\epsilon^2 r}_{\Omega^2} + \norm{\phi}_{\Gamma^2}
			\lesssim \norm{\epsilon^2 r}_{\Omega^2}
		\end{align}
		with $\Pi_{R_{i, h}}$ the $L^2$ projection onto $R_{i, h}$. We set $w_i = \Xi^{-1} \theta_i \in W_{i, h}$ for all $i \in I^2$.

		\item For $i \in I^3$, let $w_i$ be given by any bounded extension of $\phi$ into $H(\curl, \Omega_i)$, i.e. $w_i$ is chosen for all $i \in I^3$ such that
		\begin{subequations}
			\begin{align}
			 	(\bm{n} \times w_i)|_{\Gamma_j} &= \phi_j, & \forall j &\in \check{J}_i \\
			 	\| w \|_{H(\curl, \Omega^3)} &\lesssim \| \phi \|_{H(\div, \Gamma^2)}.
			 \end{align} 
		\end{subequations}

		\item Finally, we gain control of $r_i$ for $i \in I^3$. We recall that by stability condition \ref{S: stable triplet}, the spaces $\Sigma_{i, h} \times \bm{U}_{i, h} \times R_{i, h}$ form a stable triplet for the mixed formulation of elasticity with relaxed symmetry. Considering $\Gamma^2$ as a zero traction boundary condition, we use the inf-sup condition associated to this stability to form a function $\psi_i \in \Sigma_{i, h}$ such that
		\begin{subequations}
			\begin{align}
				\nabla \cdot \psi_i &= 0 \\
				\Pi_{R_{i, h}} \skw_3 \psi_i &= r_i - \Pi_{R_{i, h}} \skw_3 (\nabla \times w_i) \\
				(\bm{n} \cdot \psi_i)|_{\Gamma_j} &= 0, &
				\forall j &\in \check{J}_i \\
				\| \psi \|_{H(\div, \Omega^3)} &\lesssim \| r \|_{\Omega^3} + \| w \|_{H(\curl, \Omega^3)}
			\end{align}
		\end{subequations}
		To complete the mixed-dimensional function $\psi \in \Sigma_h$, we set $\psi_i = 0$ for $i \not \in I^3$.
	\end{enumerate}

	Using the above ingredients, we set $\xi = \psi + \epsilon^{-1} \mathfrak{D} \times w$ and obtain the first two desired properties:
	\begin{align*}
		\mathfrak{D} \cdot \epsilon \xi 
		&= \mathfrak{D} \cdot \epsilon (\psi + \epsilon^{-1} \mathfrak{D} \times w) 
		= 0, \\
		(\Pi_{R_h} \skw \xi)|_{\Omega_i} &= \Pi_{R_{i, h}} (\epsilon^{-1} \skw \mathfrak{D} \times w) \nonumber\\
		&= \Pi_{R_{i, h}} \epsilon^{-1} \skw_2(\nabla^{\perp} w_i - \jump{\bm{n} \times w}_i) \nonumber\\
		&= \Pi_{R_{i, h}} \epsilon^{-1} (\nabla \cdot \theta_i - \skw_2 \jump{\phi}_i)
		= \epsilon r_i, 
		& \forall i &\in I^2, \\
		(\Pi_{R_h} \skw \xi)|_{\Omega_i} &= \Pi_{R_{i, h}} \skw_3 (\psi_i + \nabla \times w_i) 
		= r_i = \epsilon r_i, 
		& \forall i &\in I^3.
	\end{align*}

	The bound now follows by the estimates given in each step:
	\begin{align*}
		\norm{\xi}_\Sigma 
		= \norm{\xi}_{\Omega} +
		\norm{\bm{n} \cdot \xi}_{\Gamma}
		\lesssim \norm{r_h}_R
	\end{align*}
	\qed
	\end{proof}

The two previous theorems provide the sufficient ingredients to show stability of the discretization, formally presented in the following theorem.

\begin{theorem}[Stability] \label{thm: stability}
	If the discrete spaces satisfy the conditions \ref{S: conformity}-\ref{S: stable triplet}, then the resulting mixed finite element method is stable, i.e. a unique solution exists satisfying the bound
	\begin{align}
		\norm{\sigma_h}_\Sigma + \norm{\bm{u}_h}_U + \norm{r_h}_R \lesssim 
		\normL{\epsilon \bm{f}} + \norm{\hat{\epsilon}_{\max} \bm{g}_u}_{H^{\frac{1}{2}}(\partial_u \Omega)}
	\end{align}
	with a constant independent of the grid size $h$.
\end{theorem}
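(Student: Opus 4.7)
The plan is to reduce the statement to the standard discrete Brezzi saddle-point theorem. The problem \eqref{weakform_h} has exactly the same bilinear forms $a$ and $b$ as the continuous problem \eqref{weakform}, and by the conformity condition \ref{S: conformity} we have $\Sigma_h\subset\Sigma$, $\bm{U}_h\subset\bm{U}$, $R_h\subset R$. Hence the continuity estimates of Lemma~\ref{lem: continuity} are automatically inherited on the discrete spaces, with constants independent of $h$, $\epsilon$, and $\gamma$.

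The two crucial discrete conditions—ellipticity of $a$ on the kernel of $b$ and the inf-sup condition on $b$—are precisely the content of Theorems~\ref{thm: ellip_h} and \ref{Thm: inf-sup_h}, which together require all four stability assumptions \ref{S: conformity}--\ref{S: stable triplet}. In particular, the constants produced there do not depend on the mesh size, and the $\hat{\epsilon}_{\max}$-weighting in the norms \eqref{norms_h} ensures that they are also independent of $\epsilon$ and $\gamma$. This gives the four Brezzi conditions required for discrete well-posedness.

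It remains to show continuity of the right-hand side on the discrete spaces with respect to the norms \eqref{norms_h}. For the volume term $(\epsilon^2\bm{f},\bm{v}_h)_\Omega$, factoring $\epsilon^2=\epsilon\cdot\epsilon$ and using the bound $\epsilon\lesssim\hat{\epsilon}_{\max}$ from \eqref{eps bounded by emax} together with Cauchy--Schwarz yields $(\epsilon^2\bm{f},\bm{v}_h)_\Omega\lesssim\|\epsilon\bm{f}\|_\Omega\|\bm{v}_h\|_U$. For the boundary term $(\bm{g}_u,\bm{n}\cdot\epsilon\tau_h)_{\partial_u\Omega}$, a trace inequality on $\tau_h\in\Sigma_h\subset\Sigma$, written as in the proof of the continuous well-posedness theorem, gives the bound $\lesssim\|\hat{\epsilon}_{\max}\bm{g}_u\|_{H^{1/2}(\partial_u\Omega)}\|\tau_h\|_\Sigma$. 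Both estimates are formally identical to those already established in the continuous case.

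With continuity of $a$, $b$, and the right-hand side, ellipticity of $a$ on the discrete kernel, and the discrete inf-sup condition all in hand (each with constants independent of $h$, $\epsilon$, and $\gamma$), the stated existence, uniqueness, and a priori bound follow from standard saddle-point theory, e.g.\ \cite{Boffi}. I do not anticipate a genuine obstacle here: the nontrivial technical work has already been carried out in Theorems~\ref{thm: ellip_h} and~\ref{Thm: inf-sup_h} (relying on Lemmas~\ref{lem: Poisson h} and~\ref{lem: Stokes h}), so the proof of the present theorem should amount to a brief assembly of these ingredients.
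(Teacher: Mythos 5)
Your proposal is correct and follows essentially the same route as the paper: the paper's proof of Theorem~\ref{thm: stability} simply invokes Theorems~\ref{thm: ellip_h} and~\ref{Thm: inf-sup_h} together with standard saddle-point theory, and the additional details you supply (inherited continuity of $a$ and $b$ by conformity, and continuity of the right-hand side exactly as in the continuous well-posedness theorem) are the implicit ingredients of that one-line argument.
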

\begin{proof}
	Using Theorems \ref{thm: ellip_h} and \ref{Thm: inf-sup_h}, this result follows from standard saddle point theory \cite{Boffi}.
\qed
\end{proof}

\subsection{Convergence} \label{sub: Convergence}

By consistency of the discretized problem \eqref{weakform_h} with respect to the continuous formulation \eqref{weakform} and stability from Theorem~\ref{thm: stability}, we have shown that the proposed mixed finite element discretization is convergent. In turn, this section is devoted to obtaining the rates of convergence through a priori error estimation.

Let $\Pi_{R_h}$ and $\Pi_{U_h}$ be the $L^2$-projection operators onto the finite element spaces $R_h$ and $\bm{U}_h$. Under the assumption of sufficient regularity, we introduce the canonical projection operator $\Pi_{\Sigma_h}$ such that the commutativity properties hold for $\sigma \in \Sigma$:
\begin{align*}
	\Pi_{U_{i, h}} \nabla \cdot \sigma_i &= 
	\nabla \cdot \Pi_{\Sigma_{i, h}} \sigma_i, 
	\\
	\Pi_{U_{i, h}} (\bm{n} \cdot \sigma_\hatj)|_{\Gamma_j} &= 
	(\bm{n} \cdot \Pi_{\Sigma_{\hatj, h}} \sigma_\hatj)|_{\Gamma_j}, 
	& \forall i &\in I, j \in \hat{J}_i.
\end{align*}
A direct consequence of these two properties is that for sufficiently regular $\sigma \in \Sigma$, we have the commuting property
\begin{align} \label{eq: commutativity}
	\Pi_{U_h} \mathfrak{D} \cdot \sigma &= 
	\mathfrak{D} \cdot \Pi_{\Sigma_h} \sigma.
\end{align}

Using $\| \cdot \|_{\rho, \Omega}$ as short-hand notation for the $H^\rho(\Omega)$-norm, the projection operator $\Sigma_h$ has the following approximation properties for $0 \le \rho \le k_i + 1$:
	\begin{subequations} \label{eqs proj est}
	\begin{align}
			\| (I - \Pi_{\Sigma_{i, h}}) \sigma_i \|_{\Omega_i} &\lesssim h^\rho ~ \| \sigma_i \|_{\rho,\Omega_i}, 
			& i &\in \cup_{d = 1}^n I^d, 
	\end{align}
The maximal rate is given by $k_i = (k + 1) + (n - d_i)$ for the full spaces (see Table~\ref{tab: W_h}) and $k_i = k + 1$ for the reduced spaces (Table~\ref{tab: W_h reduced}). Additionally, we have the following properties for $0 \le \rho \le k_i$:
		\begin{align}
			\| \nabla \cdot ((I - \Pi_{\Sigma_{i, h}}) \sigma_i) \|_{\Omega_i} &\lesssim h^\rho ~ \| \nabla \cdot \sigma_i \|_{\rho,\Omega_i},
			& i &\in \cup_{d = 1}^n I^d, 
			\\ 
			\| \bm{n} \cdot ((I - \Pi_{\Sigma_{\hatj, h}}) \sigma_\hatj) \|_{\Gamma_j} &\lesssim h^\rho ~ \| \bm{n} \cdot \sigma_\hatj \|_{\rho,\Gamma_j}, 
			& i &\in \cup_{d = 0}^{n - 1} I^d, 
			\ j \in \hat{J}_i,
			\label{proj est sig n}
			\\
			\| (I - \Pi_{U_{i, h}}) \bm{u}_i \|_{\Omega_i} &\lesssim h^\rho ~ \| \bm{u}_i \|_{\rho,\Omega_i}, 
			& i &\in I, 
			\label{proj est u}\\
			\| (I - \Pi_{R_{i, h}}) r_i \|_{\Omega_i} &\lesssim h^\rho ~ \| r_i \|_{\rho,\Omega_i}, 
			& i &\in \cup_{d = 2}^n I^d, 
		\end{align}
	\end{subequations}
	Note that for $\Omega^0$ and $\Gamma^0$, the projection operators correspond to the identity operator which makes \eqref{proj est sig n} and \eqref{proj est u} trivial there.

\begin{theorem}[Convergence]
	Given $(\sigma, \bm{u}, r)$ as the solution to \eqref{weakform}. Under the assumption of sufficient regularity, then the scheme converges optimally, i.e. the finite element solution $(\sigma_h, \bm{u}_h, r_h)$ to \eqref{weakform_h} satisfies the following estimate
	\begin{align*}
		\| \sigma - \sigma_h \|_\Sigma
		+ \norm{ \bm{u} - \bm{u}_h }_{U} 
		&+ \norm{ r - r_h }_{R}
		\lesssim
			  \nonumber\\
		\sum_{i \in I}  h^{k_i} \Big( &h \norm{\sigma}_{k_i + 1, \Omega_i}
			+ \norm{ \hat{\epsilon}_{\max}^{-1} \mathfrak{D} \cdot \epsilon \sigma}_{k_i, \Omega_i} \\
			&+ \sum_{j \in \hat{J}_i} \norm{\bm{n} \cdot \sigma }_{k_i, \Gamma_j} 
			+ \norm{ \hat{\epsilon}_{\max} \bm{u} }_{k_i, \Omega_i} 
			+ \norm{ \epsilon r }_{k_i, \Omega_i} \Big)
	\end{align*}
\end{theorem}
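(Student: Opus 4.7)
The plan is to combine a Cea-type quasi-optimality estimate with the approximation properties recorded in \eqref{eqs proj est}. Because the finite-element spaces are conforming by \ref{S: conformity} and the discrete problem is stable by Theorem~\ref{thm: stability}, standard saddle-point theory \cite{Boffi} (with the multipliers $(\bm{u},r)$ grouped on the product space $\bm{U}\times R$ equipped with the sum-norm) yields
\begin{align*}
\norm{\sigma-\sigma_h}_\Sigma + \norm{\bm{u}-\bm{u}_h}_U + \norm{r-r_h}_R
\lesssim \inf_{(\tau_h,\bm{v}_h,s_h)}\bigl(\norm{\sigma-\tau_h}_\Sigma + \norm{\bm{u}-\bm{v}_h}_U + \norm{r-s_h}_R\bigr),
\end{align*}
with the infimum taken over $\Sigma_h\times\bm{U}_h\times R_h$. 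The hidden constant is $\epsilon$-robust because Theorems~\ref{thm: ellip_h} and \ref{Thm: inf-sup_h} were established with $\epsilon$-independent bounds.

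I would then specialize the infimum by choosing the canonical interpolants $\tau_h=\Pi_{\Sigma_h}\sigma$, $\bm{v}_h=\Pi_{U_h}\bm{u}$, $s_h=\Pi_{R_h}r$, and estimate the three pieces of $\norm{\sigma-\Pi_{\Sigma_h}\sigma}_\Sigma$ together with the weighted $\bm{U}$- and $R$-errors manifold by manifold. The bound on $\norm{\sigma-\Pi_{\Sigma_h}\sigma}_{\Omega_i}$ (contributing the extra factor of $h$) and on the trace piece $\norm{\bm{n}\cdot(\sigma-\Pi_{\Sigma_h}\sigma)}_{\Gamma_j}$ follow immediately from \eqref{eqs proj est}. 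For the weighted-divergence contribution I would exploit that $\epsilon$ and $\hat\epsilon_{\max}$ are piecewise constant per manifold and therefore commute with $\Pi_{\Sigma_h}$ and $\Pi_{U_h}$; combined with \eqref{eq: commutativity}, this gives
\begin{align*}
\hat\epsilon_{\max}^{-1}\mathfrak{D}\cdot\epsilon(\sigma-\Pi_{\Sigma_h}\sigma)
= (I-\Pi_{U_h})\bigl(\hat\epsilon_{\max}^{-1}\mathfrak{D}\cdot\epsilon\sigma\bigr),
\end{align*}
which is bounded in $L^2(\Omega_i)$ by $h^{k_i}\norm{\hat\epsilon_{\max}^{-1}\mathfrak{D}\cdot\epsilon\sigma}_{k_i,\Omega_i}$ using the approximation property for $\Pi_{U_h}$. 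The same piecewise-constant argument moves the weights $\hat\epsilon_{\max}$ and $\epsilon$ inside the Sobolev norms on $\bm{u}-\Pi_{U_h}\bm{u}$ and $r-\Pi_{R_h}r$.

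Summing the contributions over $i\in I$ and collecting the convergence orders $k_i$ attached to each manifold produces the stated estimate. The step I anticipate as most delicate is the verification that Cea's lemma yields an $\epsilon$-robust constant in the present three-field, relaxed-symmetry setting; this reduces to reusing the $\epsilon$-independent ellipticity, inf-sup, and continuity constants already obtained in Section~\ref{subsec: well-posedness_h}, after which only the routine bookkeeping of the rates in \eqref{eqs proj est} remains.
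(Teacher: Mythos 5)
Your proposal is correct and follows essentially the same route as the paper: a quasi-optimality estimate relative to the canonical commuting projections, followed by the approximation properties \eqref{eqs proj est} applied manifold by manifold with the piecewise-constant weights $\epsilon$ and $\hat{\epsilon}_{\max}$ absorbed into the norms. The only difference is presentational: where you invoke the abstract Babu\v{s}ka--Brezzi quasi-optimality result, the paper unrolls that argument explicitly --- Galerkin orthogonality, the test function $\sigma_\Pi-\sigma_h+\delta\tau_{b,h}$ built from the discrete inf-sup construction, and the commuting-projection identity \eqref{eq: commutativity} placing $\sigma_\Pi-\sigma_h$ in the discrete kernel so that the kernel-only ellipticity of $a$ applies --- which is precisely the mechanism your citation relies on.
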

\begin{proof}
	We restrict our choice of test functions $(\tau_h, \bm{v}_h, s_h)$ to the finite element spaces and subtract the discrete equations \eqref{weakform_h} from the continuous equations \eqref{weakform}. We then obtain
	\begin{align*}
		a(\sigma - \sigma_h; \tau_h)
		+ b(\tau_h; \bm{u} - \bm{u}_h, r - r_h)
		- b(\sigma - \sigma_h; \bm{v}_h, s_h) &= 0.
	\end{align*}

	The projection operators from \eqref{eqs proj est} are then used to project the true solution onto the finite element spaces. Introducing $\sigma_\Pi := \Pi_{\Sigma_h} \sigma$, $\bm{u}_\Pi := \Pi_{U_h} \bm{u}$, and $r_\Pi := \Pi_{R_h} r$, we rewrite the above equation to
	\begin{align} \label{eq: Diff disc cont}
		a(\sigma_\Pi - \sigma_h; \tau_h)
		&+ b(\tau_h; \bm{u}_\Pi - \bm{u}_h, r_\Pi - r_h)
		- b(\sigma_\Pi - \sigma_h; \bm{v}_h, s_h) \nonumber\\
		&= a(\sigma_\Pi - \sigma; \tau_h)
		+ b(\tau_h; \bm{u}_\Pi - \bm{u}, r_\Pi - r)
		- b(\sigma_\Pi - \sigma; \bm{v}_h, s_h)
	\end{align}

	Let $\tau_{b, h}$ be the discrete stress from the construction in Theorem~\ref{Thm: inf-sup_h} with the following properties
	\begin{subequations}
		\begin{align}
			b(\tau_{b, h}; \bm{u}_\Pi - \bm{u}_h, r_\Pi - r_h) 
			&= \norm{\bm{u}_\Pi - \bm{u}_h}_{U}^2
			+ \norm{r_\Pi - r_h}_{R}^2 \\
			\| \tau_{b, h} \|_\Sigma
			&\lesssim \norm{\bm{u}_\Pi - \bm{u}_h}_{U}
			+ \norm{r_\Pi - r_h}_{R} \label{bound on tau_bh}
		\end{align}
	\end{subequations}

	The discrete test functions are then chosen to be
		\begin{align}
			\tau_h &:= \sigma_\Pi - \sigma_h + \delta \tau_{b, h}, &
			\bm{v}_h &:= \bm{u}_\Pi - \bm{u}_h, &
			s_h &:= r_\Pi - r_h,
		\end{align}
	with $\delta > 0$ a constant to be determined later. Substituting this choice of functions into the left-hand side of \eqref{eq: Diff disc cont} gives us
	\begin{align*}
		a(\sigma_\Pi - \sigma_h; \tau_h)
		&= a(\sigma_\Pi - \sigma_h; \sigma_\Pi - \sigma_h) 
		+ a(\sigma_\Pi - \sigma_h; \delta \tau_{b, h}), \nonumber\\
		b(\tau_h; \bm{u}_\Pi - \bm{u}_h, r_\Pi - r_h) \nonumber\\
		- b(\sigma_\Pi - \sigma_h; \bm{v}_h, s_h)
		&= b(\delta \tau_{b, h}; \bm{u}_\Pi - \bm{u}_h, r_\Pi - r_h) \nonumber\\
		&= \delta (\norm{ \bm{u}_\Pi - \bm{u}_h }_U^2 
		+ \norm{ r_\Pi - r_h }_R^2).
	\end{align*}
	Next, due to \eqref{eq: commutativity} and \ref{S: Poisson}, we have $\mathfrak{D} \cdot \epsilon(\sigma_\Pi - \sigma_h) = \Pi_{U_h} \mathfrak{D} \cdot \epsilon(\sigma - \sigma_h) = 0$. Hence, the coercivity of $\mathfrak{A}$ from \eqref{Bounds A} allows us to bound $a(\sigma_\Pi - \sigma_h; \sigma_\Pi - \sigma_h)$ from below by $\norm{\sigma_\Pi - \sigma_h}_\Sigma^2$. We then obtain the following bound with respect to the right-hand side of \eqref{eq: Diff disc cont}:
	\begin{align*}
		\norm{\sigma_\Pi - \sigma_h}_\Sigma^2 
		&+ \delta (\norm{ \bm{u}_\Pi - \bm{u}_h }_U^2 
		+ \norm{ r_\Pi - r_h }_R^2)
		\lesssim \nonumber\\
		&- a(\sigma_\Pi - \sigma_h; \delta \tau_{b, h})
		+ a(\sigma_\Pi - \sigma; \sigma_\Pi - \sigma_h + \delta \tau_{b, h}) \nonumber\\
		&+ b(\sigma_\Pi - \sigma_h + \delta \tau_{b, h}; \bm{u}_\Pi - \bm{u}, r_\Pi - r) \nonumber\\
		&- b(\sigma_\Pi - \sigma; \bm{u}_\Pi - \bm{u}_h, r_\Pi - r_h).
	\end{align*}
	Next, we use the continuity of the forms $a$ and $b$ from Lemma~\ref{lem: continuity} to bound the right-hand side further
	\begin{align*}
		\ldots 
		\lesssim &\ 
		\norm{ \sigma_\Pi - \sigma_h }_\Sigma
		\delta \norm{\tau_{b, h}}_\Sigma \nonumber\\
		&+ (\norm{\sigma_\Pi - \sigma_h}_\Sigma + \delta \norm{\tau_{b, h}}_\Sigma)
		(\norm{ \sigma_\Pi - \sigma }_\Sigma + \norm{\bm{u}_\Pi - \bm{u}}_U + \norm{r_\Pi - r}_R) \nonumber\\
		&+ 
		(\norm{ \bm{u}_\Pi - \bm{u}_h}_U
		+ \norm{ r_\Pi - r_h}_R)
		\norm{ \sigma_\Pi - \sigma}_\Sigma
		.
	\end{align*}		
	An application of Young's inequality and rearranging terms then gives
	\begin{align*}
		\norm{\sigma_\Pi - \sigma_h}_\Sigma^2 
		&+ 
		\delta (\norm{ \bm{u}_\Pi - \bm{u}_h }_U^2 
		+ \norm{ r_\Pi - r_h }_R^2) \nonumber\\
		&\lesssim
		\delta^2 \norm{\tau_{b, h}}_\Sigma^2
		+ (1 + \delta^{-1})\norm{ \sigma_\Pi - \sigma }_\Sigma^2 
		+ \norm{\bm{u}_\Pi - \bm{u}}_U^2 
		+ \norm{r_\Pi - r}_R^2.
	\end{align*}
	Using \eqref{bound on tau_bh} and setting $\delta$ sufficiently small leads us to 
	\begin{align*}
		\| \sigma_\Pi - \sigma_h \|_\Sigma^2
		&+ \norm{ \bm{u}_\Pi - \bm{u}_h }_{U}^2 
		+ \norm{ r_\Pi - r_h }_{R}^2 \nonumber\\
		&\lesssim \| \sigma_\Pi - \sigma \|_\Sigma^2
		+ \norm{ \bm{u}_\Pi - \bm{u} }_{U}^2 
		+ \norm{ r_\Pi - r }_{R}^2.
	\end{align*}
	With the triangle inequality, we thus obtain the estimate
	\begin{align*}
		\| \sigma - \sigma_h \|_\Sigma
		&+ \norm{ \bm{u} - \bm{u}_h }_{U} 
		+ \norm{ r - r_h }_{R} \\
		&\lesssim \| \sigma_\Pi - \sigma \|_\Sigma
		+ \norm{ \bm{u}_\Pi - \bm{u} }_{U} 
		+ \norm{ r_\Pi - r }_{R}.
	\end{align*}
	An application of the approximation properties \eqref{eqs proj est} then finishes the proof.
	\qed
	\end{proof}

\bibliographystyle{spmpsci}
\bibliography{elasbib}

\begin{thebibliography}{10}
\providecommand{\url}[1]{{#1}}
\providecommand{\urlprefix}{URL }
\expandafter\ifx\csname urlstyle\endcsname\relax
  \providecommand{\doi}[1]{DOI~\discretionary{}{}{}#1}\else
  \providecommand{\doi}{DOI~\discretionary{}{}{}\begingroup
  \urlstyle{rm}\Url}\fi

\bibitem{arbogast2016linear}
Arbogast, T., Taicher, A.L.: A linear degenerate elliptic equation arising from
  two-phase mixtures.
\newblock SIAM Journal on Numerical Analysis \textbf{54}(5), 3105--3122 (2016)

\bibitem{arnold2006differential}
Arnold, D.N., Falk, R.S., Winther, R.: Differential complexes and stability of
  finite element methods {II}: The elasticity complex.
\newblock IMA Volumes in Mathematics and its Applications \textbf{142}, 47
  (2006)

\bibitem{AFW_FEEC}
Arnold, D.N., Falk, R.S., Winther, R.: Finite element exterior calculus,
  homological techniques, and applications.
\newblock Acta Numerica \textbf{15}, 1--155 (2006)

\bibitem{arnold2002mixed}
Arnold, D.N., Winther, R.: Mixed finite elements for elasticity.
\newblock Numerische Mathematik \textbf{92}(3), 401--419 (2002)

\bibitem{awanou2013rectangular}
Awanou, G.: Rectangular mixed elements for elasticity with weakly imposed
  symmetry condition.
\newblock Advances in Computational Mathematics pp. 1--17 (2013)

\bibitem{bjornaraa2016vertically}
Bj{\o}rnar{\aa}, T.I., Nordbotten, J.M., Park, J.: Vertically integrated models
  for coupled two-phase flow and geomechanics in porous media.
\newblock Water Resources Research \textbf{52}(2), 1398--1417 (2016)

\bibitem{Boffi}
Boffi, D., Fortin, M., Brezzi, F.: Mixed finite element methods and
  applications.
\newblock Springer series in computational mathematics. Springer, Berlin,
  Heidelberg (2013)

\bibitem{boon2017excalc}
Boon, W.M., Nordbotten, J.M., Vatne, J.E.: Functional analysis and exterior
  calculus on mixed-dimensional geometries.
\newblock arXiv preprint arXiv:1710.00556  (2017)

\bibitem{Boon2018Robust}
Boon, W.M., Nordbotten, J.M., Yotov, I.: Robust discretization of flow in
  fractured porous media.
\newblock SIAM Journal on Numerical Analysis \textbf{56}(4), 2203--2233 (2018)

\bibitem{brezzi1985two}
Brezzi, F., Douglas, J., Marini, L.D.: Two families of mixed finite elements
  for second order elliptic problems.
\newblock Numerische Mathematik \textbf{47}(2), 217--235 (1985)

\bibitem{caillerie1980effect}
Caillerie, D., Nedelec, J.: The effect of a thin inclusion of high rigidity in
  an elastic body.
\newblock Mathematical Methods in the Applied Sciences \textbf{2}(3), 251--270
  (1980)

\bibitem{ciarlet2000theory}
Ciarlet, P.: Mathematical Elasticity, Vol III, Theory of Shells.
\newblock Mathematical Elasticity. Elsevier Science (2000)

\bibitem{evans1998partial}
Evans, L.: Partial Differential Equations.
\newblock Orient Longman (1998)

\bibitem{Licht}
Licht, M.W.: Complexes of discrete distributional differential forms and their
  homology theory.
\newblock Foundations of Computational Mathematics  (2016)

\bibitem{Roberts2}
Martin, V., Jaffr{\'e}, J., Roberts, J.E.: Modeling fractures and barriers as
  interfaces for flow in porous media.
\newblock SIAM J. Sci. Comput. \textbf{26}(5), 1667--1691 (2005)

\bibitem{Nedelec}
Nedelec, J.: Mixed finite elements in $\mathbb{R}^3$.
\newblock Numerische Mathematik \textbf{35}(3), 315--341 (1980)

\bibitem{Nordbotten2017DD}
Nordbotten, J.M., Boon, W.M.: Modeling, structure and discretization of
  mixed-dimensional partial differential equations.
\newblock In: Domain Decomposition Methods in Science and Engineering XXIV,
  Lecture Notes in Computational Science and Engineering (2017)

\bibitem{nordbottenbook}
Nordbotten, J.M., Celia, M.A.: Geological Storage of CO2: Modeling Approaches
  for Large-Scale Simulation.
\newblock Wiley (2011)

\bibitem{quarteroni1999domain}
Quarteroni, A., Valli, A.: Domain decomposition methods for partial
  differential equations.
\newblock Oxford University Press (1999)

\end{thebibliography}

\end{document}